\theoremstyle{plain}
\newtheorem{theorem}{Theorem}[section]
\newtheorem{thmintro}{Theorem}[section]
\newtheorem{lemma}[theorem]{Lemma}
\newtheorem{corollary}[theorem]{Corollary}
\newtheorem{conjintro}[thmintro]{Conjecture}
\newtheorem{problemintro}{Problem}
\theoremstyle{definition}
\newtheorem{definition}[theorem]{Definition}
\definecolor{darkblue}{rgb}{0,0,0.7} % darkblue color
\newcommand{\darkblue}{\color{darkblue}} % darkblue command
\newcommand{\defn}[1]{\emph{\darkblue #1}} % emphasis of a definition
\DeclareMathOperator{\join}{\ast}
\DeclareMathOperator{\susp}{\Sigma} % suspension
\DeclareMathOperator{\lk}{lk} % link
\DeclareMathOperator{\st}{st} % star
\DeclareMathOperator{\del}{del} % deletion
\DeclareMathOperator{\verti}{V} % vertices
\DeclareMathOperator{\edge}{E} % edges
\title{Bounds for entries of $\gamma$-vectors of flag homology spheres}
\author[J.-P.~Labb\'e]{Jean-Philippe Labb\'e$^{1}$}
\address[J.-P. Labb\'e]{Einstein Institute of Mathematics, Hebrew University of Jerusalem, Jerusalem 91904, Israel}
\email{labbe@math.fu-berlin.de}
\urladdr{http://page.mi.fu-berlin.de/labbe/}
\thanks{$^{1}$With the support of a FQRNT post-doctoral fellowship and a post-doctoral ISF-805/11 grant}
\author[E.~Nevo]{Eran Nevo$^{2}$}
\address[E.~Nevo]{Einstein Institute of Mathematics, Hebrew University of Jerusalem, Jerusalem 91904, Israel}
\email{nevo@math.huji.ac.il}
\urladdr{http://math.huji.ac.il/~nevo/}
\thanks{$^{2}$Partially supported by Israel Science Foundation grants ISF-805/11 and ISF-1695/15}
\begin{document}

\begin{abstract}
We present some enumerative and structural results for flag homology spheres.
For a flag homology sphere $\Delta$, we show that its $\gamma$-vector $\gamma^\Delta=(1,\gamma_1,\gamma_2,\ldots)$ satisfies:
\begin{align*}
  \gamma_j=0,\text{ for all } j>\gamma_1, \quad \gamma_2\leq\binom{\gamma_1}{2}, \quad \gamma_{\gamma_1}\in\{0,1\}, \quad \text{ and }\gamma_{\gamma_1-1}\in\{0,1,2,\gamma_1\},
\end{align*}
supporting a conjecture of Nevo and Petersen.
Further we characterize the possible structures for~$\Delta$ in extremal cases.
As an application, the techniques used produce infinitely many $f$-vectors of flag balanced simplicial complexes that are not $\gamma$-vectors of flag homology spheres (of any dimension); these are the first examples of this kind.

In addition, we prove a flag analog of Perles' 1970 theorem on $k$-skeleta of polytopes with ``few'' vertices, specifically:
the number of combinatorial types of $k$-skeleta of flag homology spheres with $\gamma_1\leq b$, of any given dimension, is bounded independently of the dimension.
\end{abstract}

\maketitle

\section{Introduction}
Facial enumeration of polytopes and simplicial spheres are of great interest since antiquity, and research on this topic revealed fascinating mathematics, notably in the celebrated $g$-theorem which characterizes the face numbers of simplicial $d$-polytopes \cite{billera_sufficiency_1980,stanley_number_1980,stanley_combinatorics_1996}.

In relation to the Charney--Davis conjecture, a subfamily of great importance is that of \emph{flag} simplicial polytopes or more generally \emph{flag} homology spheres.
These objects have the property that their faces are exactly the cliques of their graphs, equivalently all their minimal non-faces have two elements.
In the flag case the right analog of the $g$-vector (from the simplicial case) seems to be Gal's $\gamma$-vector.
Gal conjectured that flag homology spheres have nonnegative $\gamma$-vectors \cite{gal_real_2005} which, by the same token, would validate the Charney--Davis conjecture.
Later Nevo and Petersen strengthened Gal's conjecture by proposing the following combinatorial interpretation of the $\gamma$-vector.

\begin{conjintro}\cite{nevo_gamma_2011}\label{conj:Nevo-Petersen}
  Let $\Delta$ be a flag homology sphere. The $\gamma$-vector of $\Delta$ is the $f$-vector of a flag simplicial complex.
\end{conjintro}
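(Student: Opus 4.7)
The plan is to construct, for each flag homology sphere $\Delta$, a flag simplicial complex $\Phi(\Delta)$ whose $f$-vector equals $\gamma^\Delta$. Two natural routes present themselves: an inductive construction, and a numerical verification via the Frankl--F\"uredi--Kalai characterization of $f$-vectors of flag complexes.

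For the constructive route I would induct on the number of vertices of $\Delta$. The base case is the boundary of the cross-polytope $\Delta=\partial\diamondsuit^d$, where $\gamma^\Delta=(1,0,\ldots,0)$ and one takes $\Phi(\Delta)$ to be the void complex. The engine of the induction is the Nevo--Petersen edge-subdivision formula
\[\gamma^{\Delta'}=\gamma^\Delta+t\cdot\gamma^{\lk_\Delta(e)},\]
valid when $\Delta'$ is obtained from $\Delta$ by subdividing the edge $e$. If $\Phi(\Delta)$ and $\Phi(\lk_\Delta(e))$ are already available by induction, and the latter embeds as an induced subcomplex of the former, then I would define $\Phi(\Delta')$ by adjoining a new vertex $v$ whose link is exactly this embedded copy of $\Phi(\lk_\Delta(e))$. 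The resulting complex remains flag, and its $f$-polynomial is $\gamma^\Delta+t\cdot\gamma^{\lk_\Delta(e)}=\gamma^{\Delta'}$, as desired. The numerical route, by contrast, would verify directly that $\gamma^\Delta$ satisfies the Kruskal--Katona-type Frankl--F\"uredi--Kalai inequalities that characterize $f$-vectors of flag complexes. The bounds established in the present paper --- namely $\gamma_j=0$ for $j>\gamma_1$, $\gamma_2\le\binom{\gamma_1}{2}$, and the constraints on $\gamma_{\gamma_1-1}$ and $\gamma_{\gamma_1}$ --- correspond precisely to the extremal and near-extremal instances of these inequalities, and so provide strong numerical evidence for the conjecture without settling it.

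The main obstacle in the constructive approach is that not every flag homology sphere is reachable from $\partial\diamondsuit^d$ via edge subdivisions, so the inductive scheme must be supplemented by additional operations (flag bistellar flips, suitable joins, connected sums) that both control $\gamma$-vectors and preserve the coherence $\Phi(\lk_\Delta(e))\hookrightarrow\Phi(\Delta)$ that the induction requires; maintaining this coherence across a sequence of moves appears to be the genuinely hard part. In the numerical approach, the difficulty is that establishing \emph{all} Frankl--F\"uredi--Kalai inequalities simultaneously seems to require uniform structural information about $\gamma$-vectors well beyond the extremal bounds currently available. A proof of the full conjecture will likely combine explicit construction in the extremal regime with numerical propagation in the bulk.
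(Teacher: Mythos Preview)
The statement you are addressing is Conjecture~\ref{conj:Nevo-Petersen}, which is \emph{open}: the paper does not prove it, and indeed frames its main results (Theorem~\ref{thm:intro_c}) only as partial evidence toward it. So there is no ``paper's own proof'' to compare against.

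Your write-up is not a proof either, and you are candid about this: you sketch two strategies and then correctly identify the genuine obstructions in each. In the constructive route, the gap is exactly where you say it is --- flag homology spheres are not known to be connected to cross-polytopes by edge subdivisions alone, and even within the class that is so connected, maintaining a functorial assignment $\Delta\mapsto\Phi(\Delta)$ with the coherence $\Phi(\lk_\Delta(e))\hookrightarrow\Phi(\Delta)$ is precisely the unresolved content of the conjecture. (Known positive results, e.g.\ for barycentric subdivisions or for the classes treated in \cite{nevo_gamma_2011,Nevo-Petersen-Tenner_2011,aisbett_gamma_2012}, succeed because one can control this coherence explicitly.) In the numerical route, the Frankl--F\"uredi--Kalai inequalities you would need go far beyond the handful of extremal bounds proved here; Theorem~\ref{thm:intro_c} checks only the top few inequalities, and there is no mechanism in the paper for the generic $\gamma_i\le\binom{\gamma_1}{i}$, let alone the finer shifted-Kruskal--Katona constraints.

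In short: your assessment that the paper's bounds are necessary conditions consistent with, but far from establishing, the conjecture is accurate. What you have written is a reasonable research outline, not a proof, and should be labeled as such.
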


Partial results toward this conjecture include \cite{gal_real_2005, nevo_gamma_2011, Nevo-Petersen-Tenner_2011, aisbett_gamma_2012, volodin_cubical_2010, Nevo-Murai_gamma_2012, zheng_flag_2015, adamaszek_upper_2016}.
The recent upper bound results of Adamaszek--Hladk\'{y} and of Zheng \cite{adamaszek_upper_2016,zheng_flag_2015} on the entries of the $\gamma$-vector require $\gamma_1>>\dim(\Delta)$, or $\dim(\Delta)\in\{3,5\}$ respectively.
Conjecture~\ref{conj:Nevo-Petersen} implies the following upper and lower bounds for the entries $\gamma_i$ of the $\gamma$-vector  in terms of $\gamma_1$ alone.

\begin{conjintro}\label{conj:UBC}
  Let $\Delta$ be a flag homology sphere with $\gamma_1=\ell$. For $i\geq 2$, the entries of the $\gamma$-vector of $\Delta$ satisfy
  \[
    0 \leq \gamma_i\leq \binom{\ell}{i}.
  \]
\end{conjintro}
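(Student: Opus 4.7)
The plan is to split Conjecture~\ref{conj:UBC} into its two halves and treat them very differently. The lower bound $\gamma_i\geq 0$ is Gal's conjecture itself, open in general, so for a complete proof one would need either to settle Gal's conjecture or to restrict to classes where it is already known (Coxeter complexes, barycentric subdivisions of simplicial spheres, etc.). The more tractable half is the upper bound $\gamma_i\leq\binom{\ell}{i}$, and this is where I would concentrate the effort.

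The key observation is that Conjecture~\ref{conj:Nevo-Petersen} implies the upper bound immediately: if $(\gamma_0,\gamma_1,\ldots)$ is the $f$-vector of a flag simplicial complex $\Gamma$, then $\Gamma$ has $\gamma_1=\ell$ vertices and hence at most $\binom{\ell}{i}$ faces of size $i$. So one route is to aim directly at Conjecture~\ref{conj:Nevo-Petersen}, or at the strictly weaker statement that for every flag sphere $\Delta$ there exists a flag complex $\Gamma_\Delta$ on $\ell$ vertices with $f_{i-1}(\Gamma_\Delta)\geq\gamma_i(\Delta)$ for every $i$; the latter already implies the bound.

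Concretely, write $\ell=\gamma_1=f_0(\Delta)-2\dim(\Delta)-2$, which measures the excess of vertices of $\Delta$ beyond the minimum $2(\dim\Delta+1)$ realized by the cross-polytope boundary. I would induct on $\ell$. The base case $\ell=0$ forces $\Delta$ to be a cross-polytope boundary by a theorem of Gal, where $\gamma_i=0$ for all $i\geq 1$ and the bound is trivial. For the inductive step I would seek a structural operation producing a flag homology sphere $\Delta'$ with $\gamma_1(\Delta')=\ell-1$, say a vertex link, an edge contraction preserving flagness, or the inverse of a stellar subdivision, and compare the $\gamma$-vectors of $\Delta$ and $\Delta'$ via the standard inversion $\gamma_i=\sum_j(-1)^{i-j}\binom{d-j}{i-j}h_j$. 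The identity $\binom{\ell}{i}=\binom{\ell-1}{i}+\binom{\ell-1}{i-1}$ says it is enough to change $\gamma_i$ by at most $\binom{\ell-1}{i-1}$ under such a move.

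The main obstacle is precisely this inductive step: unlike in the simplicial $g$-theorem, no Kruskal--Katona-type compression is known for flag spheres that preserves flagness and controls the $\gamma$-vector, so one cannot simply reduce to a canonical extremal example. In practice I expect the feasible route is the partial list of bounds announced in the abstract: the support statement $\gamma_j=0$ for $j>\ell$ can be attacked by analyzing the missing edges of the $1$-skeleton; the inequality $\gamma_2\leq\binom{\ell}{2}$ is a flag Kruskal--Katona bound relating $f_1$ and $f_0$ through the $h$-vector; and the endpoint cases $i\in\{\ell-1,\ell\}$ admit only a very restricted list of configurations for the complement graph of missing edges, which can be enumerated directly. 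Bridging these techniques uniformly to intermediate values of $i$ remains the essential difficulty.
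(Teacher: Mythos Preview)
This statement is Conjecture~\ref{conj:UBC} in the paper, and the paper does \emph{not} prove it; it establishes only the partial results of Theorem~\ref{thm:intro_c} (namely $\gamma_2\leq\binom{\ell}{2}$, $\gamma_j=0$ for $j>\ell$, $\gamma_\ell\in\{0,1\}$, and $\gamma_{\ell-1}\in\{0,1,2,\ell\}$). Your proposal is accordingly not a proof but a research outline, and a well-calibrated one: you correctly flag the lower bound as Gal's conjecture (open in general), and your closing paragraph anticipates precisely the partial results the paper actually achieves.

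On technique, your induction-on-$\ell$ idea is the right shape, but the paper's execution is more specific than a generic ``pass to some $\Delta'$ with $\gamma_1=\ell-1$''. The paper introduces the \emph{polar size} $\pi_\Delta=\min_v\iota(v)$ (minimum number of non-neighbors of a vertex) and branches on its value. When $\pi_\Delta=1$, $\Delta$ is a suspension and one strips it off (Lemma~\ref{lem:polar_values}). When $\pi_\Delta=2$, the two antipodes of a witness vertex $v_0$ form an edge $e$ not lying in any induced $4$-cycle, and contracting it yields exactly the Pascal-type decomposition you were hoping for: $\gamma^\Delta(t)=\gamma^{L}(t)+t\,\gamma^{J}(t)$ with $L=\lk_\Delta(v_0)$ and $J=\lk_\Delta(e)$ (Lemma~\ref{lem:iota_equals_2}). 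When $\pi_\Delta\geq 3$, a direct edge-count (Lemma~\ref{lem:polar_size_3_on_gamma2}) handles $\gamma_2$, and a structural result extracting a join factor $C_{\pi_\Delta+3}$ (Theorem~\ref{thm:polar_values}), together with the suspension bound $d\geq 2\ell+1\Rightarrow\Delta\cong\Sigma\Gamma$ (Lemma~\ref{lem:bound_dim}), handles the top entries. The obstruction to intermediate $i$ is exactly the one you name: in $\gamma_i^\Delta=\gamma_i^L+\gamma_{i-1}^J$ one has $\gamma_1^L=\ell-1$ but only $\gamma_1^J\leq\ell-1$, and bounding $\gamma_{i-1}^J$ by $\binom{\ell-1}{i-1}$ would already require the conjecture for $J$.
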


Our first main result is a proof of parts of this conjecture.

\begin{thmintro}\label{thm:intro_c}
  Let $\Delta$ be a flag homology sphere with $\gamma_1=\ell$. The entries of the $\gamma$-vector of $\Delta$ satisfy the following properties:
  \begin{enumerate}[label=\roman{enumi}),ref=\ref{thm:intro_c}~\roman{enumi})]
   \item $\gamma_2\leq \binom{\ell}{2}$, \label{thm:intro_ci}
   \item $\gamma_j=0$ for all $j>\ell$, \label{thm:intro_cii}
   \item $\gamma_\ell\in\{0,1\}$, \label{thm:intro_ciii}
   \item $\gamma_{\ell-1}\in\{0,1,2,\ell\}$. \label{thm:intro_civ}
  \end{enumerate}
\end{thmintro}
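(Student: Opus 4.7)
The plan is to prove the four statements together by induction on the dimension $d-1$ of $\Delta$, with the boundary of the $d$-dimensional cross-polytope (for which $\ell=0$ and $\gamma_j=0$ for all $j\geq 1$) as the base case. The main inductive handle is the analysis of links: if $v\in\verti(\Delta)$ then $\lk_\Delta(v)$ is a flag $(d-2)$-sphere, and if $\{u,v\}$ is a non-edge of $\Delta$ then $\lk_\Delta(u)\cap\lk_\Delta(v)$ is again a flag sphere of smaller dimension, whose $\gamma$-vector one can hope to relate back to $\gamma(\Delta)$.

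For part (ii), I would use the classical identity
\[
  \sum_{v\in\verti(\Delta)} h(\lk_\Delta(v);t)=d\,h(\Delta;t)+(1-t)h'(\Delta;t),
\]
rewritten in the basis $\{t^j(1+t)^{d-1-2j}\}_j$ in which vertex-link $\gamma$-vectors naturally live. The key auxiliary input is that $\gamma_1(\lk_\Delta(v))\leq \ell$ for every vertex $v$: this follows from the fact that in a flag sphere of positive dimension no vertex is adjacent to all others (otherwise, by the flag condition, $v$ would lie in every facet, making $\Delta$ a cone), so $\deg(v)\leq n-2$. The inductive hypothesis then forces $\gamma_j(\lk_\Delta(v))=0$ for $j>\ell$, and matching coefficients in the identity propagates the vanishing to $\gamma(\Delta)$.

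For part (i), a direct expansion starting from $h_2=f_1-(d-1)f_0+\binom{d}{2}$, together with $f_0=2d+\ell$ and $f_1=\binom{f_0}{2}-m$ (where $m$ denotes the number of non-edges of $\Delta$), yields the identity $\gamma_2=\binom{\ell}{2}+(d+3\ell)-m$. Part (i) is therefore equivalent to the lower bound $m\geq d+3\ell$ on the number of non-edges of a flag $(d-1)$-sphere with $\gamma_1=\ell$. The cross-polytope gives equality ($m=d$, $\ell=0$); for the inductive step, if $\Delta$ is a suspension then both $m$ and $d$ grow by one while $\ell$ is preserved, and the bound is maintained. Otherwise one must exhibit the missing non-edges through a structural argument, typically by choosing a vertex (or missing edge) whose link contributes a controlled number of additional non-edges beyond what suspension would produce.

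Parts (iii) and (iv) are structural refinements at the top of the $\gamma$-vector. The plan is to classify flag spheres attaining $\gamma_\ell\geq 1$ by showing, via (ii) together with induction on links, that such $\Delta$ must be an iterated suspension of a specific ``core'' flag sphere, pinning down $\gamma_\ell\in\{0,1\}$. For (iv), a finer analysis of the non-edge pattern among the distinguished vertices of that core mirrors the four possible structures of a flag simplicial complex on $\ell$ vertices (full simplex on $\ell$ vertices; a single missing edge; a pencil of missing edges through a common vertex; or missing edges covering no single vertex), corresponding exactly to the four values $\ell, 2, 1, 0$. The main obstacle is almost certainly (i): the lower bound $m\geq d+3\ell$ is a global constraint not visible from any single link, and closing the non-suspendable case will require a careful tradeoff counting argument between links and the missing-edge graph of $\Delta$.
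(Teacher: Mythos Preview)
Your plan for (ii) has a concrete gap. Rewriting the identity $\sum_v h(\lk_\Delta v;t)=d\,h(\Delta;t)+(1-t)h'(\Delta;t)$ in the $\gamma$-bases gives, for each $i\le \lfloor (d-1)/2\rfloor$,
\[
\sum_{v}\gamma_i(\lk_\Delta v)=(i+1)\,\gamma_{i+1}(\Delta)+2(d-2i)\,\gamma_i(\Delta).
\]
Knowing that $\gamma_j(\lk_\Delta v)=0$ for all $j>\ell$ only makes the left side vanish for $i>\ell$; what you obtain is the recursion $(i+1)\gamma_{i+1}+2(d-2i)\gamma_i=0$, not vanishing. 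To start a descent you need $\gamma_{\lfloor d/2\rfloor}=0$, and when $d$ is even the identity carries no information about $\gamma_{d/2}$ at all (the relevant coefficient $d-2i$ is zero, and there is no equation with $i=d/2$). Moreover, to reach $\gamma_{\ell+1}=0$ from above you would pass through $i=\ell$, where the left side involves $\gamma_\ell(\lk_\Delta v)$, which need not vanish. So ``matching coefficients'' does not propagate the vanishing; some genuinely new structural input is required.

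The paper supplies exactly that input, and its mechanism is quite different from yours. The key invariant is the \emph{polar size} $\pi_\Delta=\min_v\iota(v)$, where $\iota(v)$ counts antipodes of $v$. Two structural facts drive everything: first, if a vertex $v_0$ with $\iota(v_0)=\pi_\Delta$ has $\lk_\Delta(v_0)\cong\Sigma\Gamma$, then in fact $\Delta\cong\Gamma\ast C_{\pi_\Delta+3}$; second, if $d\ge 2\ell+1$ then $\Delta$ is a suspension (proved by induction on $\ell$, not on $d$). Part (ii) is then immediate: strip suspensions until the dimension drops below $2\ell$. The same machinery yields (iii) and (iv), whose extremal cases are joins of pentagons and hexagons rather than anything detectable from a ``non-edge pattern among distinguished vertices.'' For (i), the paper again splits on $\pi_\Delta$: when $\pi_\Delta=2$ one gets the clean recursion $\gamma^\Delta(t)=\gamma^{\lk(v_0)}(t)+t\,\gamma^{\lk(xy)}(t)$ via contraction of the unique antipode edge $xy$, and when $\pi_\Delta\ge 3$ a direct edge-count (essentially your $m\ge d+3\ell$) is carried out using two disjoint facets in $\lk(v_0)$ avoiding a chosen vertex. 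Your outline for (i) stops precisely at the non-suspension case, which is where all the work lies; the polar-size dichotomy is what makes it tractable.
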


Further, we characterize the structure of extremal cases in the above theorem, omitted in the introduction for brevity.
It is important to notice that the property in Theorem~\ref{thm:intro_cii} contrasts with $g$-vectors of simplicial polytopes: fixing any $b\geq 1$ there exists, for any integer $i>1$, a polytope $P_i$ with $g_1(P_i)\leq b$ and $g_i(P_i)>0$.

A basic ingredient in the proof of Theorem~\ref{thm:intro_cii} to \ref{thm:intro_civ} is to show that if the dimension of $\Delta$ is large enough compared with $\gamma_1$ then $\Delta$ is a suspension over a lower dimensional flag homology sphere. Similarly to Perles' \cite[Theorem 1.1]{kalai_aspects_1994}, we obtain the following analog as a corollary of the suspension result.

\begin{thmintro}\label{thm:intro_d}
  Let $F_{b,k}(d)$ denote the number of combinatorial types of $k$-skeleta of flag homology $(d-1)$-spheres with at most $2d+b$ vertices.
  Given $b$ and $k$, the set $\{F_{b,k}(i)\}_{i\geq 1}$ is bounded, hence finite.
\end{thmintro}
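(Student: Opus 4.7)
The plan is to iterate the suspension lemma signposted in the paragraph just before the statement: that there exists a function $D(\ell)$ such that every flag homology $(d-1)$-sphere $\Delta$ with $\gamma_1(\Delta) = \ell$ and $d \geq D(\ell)$ is combinatorially a suspension $\Delta = \susp \Delta'$ over a flag homology $(d-2)$-sphere. Two standard facts will make the iteration legitimate: (a) the $\gamma$-polynomial is multiplicative under joins and $\gamma(S^0) = 1$, so suspension preserves the entire $\gamma$-vector, and in particular $\gamma_1(\Delta') = \gamma_1(\Delta) \leq b$; and (b) the link of a flag homology sphere at a vertex is again a flag homology sphere, so $\Delta' = \lk(v,\Delta)$ (for either apex $v$) is an eligible object to which the suspension lemma may be re-applied.

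Iterating desuspension until the remaining dimension drops below $D(b)$, one arrives at a decomposition
\[
  \Delta = \susp^{r} \Delta_0,
\]
where $\Delta_0$ is a flag homology sphere with $\dim(\Delta_0) < D(b)$ and $\gamma_1(\Delta_0) \leq b$, and $r = d - 1 - \dim(\Delta_0) \geq 0$. Since a flag homology $(d_0-1)$-sphere has exactly $2d_0 + \gamma_1$ vertices, $\Delta_0$ has at most $2D(b) + b$ vertices; consequently the set $\mathcal{S}_b$ of combinatorial isomorphism classes of such $\Delta_0$ is finite and depends only on $b$.

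For $d \geq D(b)+1$, every flag homology $(d-1)$-sphere $\Delta$ with $\gamma_1(\Delta) \leq b$ is of the form $\susp^{r}\Delta_0$ for some $\Delta_0 \in \mathcal{S}_b$, with the integer $r$ forced by $d$ and $\dim(\Delta_0)$; hence the number of such spheres, and \emph{a fortiori} the number of their $k$-skeleta, is at most $|\mathcal{S}_b|$. For the finitely many $d < D(b)+1$, $F_{b,k}(d)$ is trivially finite since there are only finitely many simplicial complexes on at most $2d+b$ vertices. Combining these two bounds yields a uniform bound on $\{F_{b,k}(d)\}_{d \geq 1}$. The principal obstacle is of course the suspension lemma itself; once that is in hand, the argument above is essentially bookkeeping, and one observes that the bound obtained for large $d$ does not depend on $k$, since passing to $k$-skeleta can only identify combinatorial types.
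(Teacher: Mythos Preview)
Your proposal is correct and follows essentially the same route as the paper: both use the suspension lemma (the paper's Lemma~\ref{lem:bound_dim}, with $D(\ell)=2\ell+1$) to iterate desuspension down to a finite set of low-dimensional ``cores'' and then count. The only cosmetic differences are that the paper stratifies by the exact value of $\gamma_1$ (defining $f_{b,k}(d)$ and summing) and handles small $d$ by suspending \emph{up} to dimension $2b-1$ rather than invoking trivial finiteness; your surjection argument from $\mathcal{S}_b$ is arguably cleaner and, as you note, makes the $k$-independence of the large-$d$ bound transparent.
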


Does the converse of Conjecture~\ref{conj:Nevo-Petersen} hold? Namely, is it true that for any flag simplicial complex~$\Gamma$, there exists a flag homology sphere $\Delta$, of any dimension, such that $\gamma^\Delta$ equals the $f$-vector of $\Gamma$?
We provide the first counterexamples.

\begin{thmintro}\label{thm:intro_e}
  Let $k\geq 3$ and $\Gamma_k$ be the flag balanced simplicial complex consisting of the disjoint union of a $(k-1)$-simplex and an isolated vertex. The $f$-vector of $\Gamma_k$ is not the $\gamma$-vector of any flag homology sphere of any dimension.
\end{thmintro}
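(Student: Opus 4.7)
The plan is a proof by contradiction. Suppose there is a flag homology sphere $\Delta$ with $\gamma^\Delta$ equal to the $f$-vector of $\Gamma_k$. Since the $(k-1)$-simplex contributes $\binom{k}{i+1}$ faces of dimension $i$ and the isolated vertex only adds one $0$-face, we read off
\[
  \gamma_1 = k+1, \qquad \gamma_i = \binom{k}{i} \text{ for } 2 \le i \le k, \qquad \gamma_j = 0 \text{ for } j \ge k+1.
\]
Setting $\ell := \gamma_1 = k+1$, we have $\gamma_{\ell-1} = 1$ and $\gamma_\ell = 0$, so $\Delta$ lies in the extremal case $\gamma_{\ell-1} = 1$ of Theorem~A~(iv). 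The raw numerical bound in Theorem~A~(iv) permits this combination on its own, so the argument has to invoke the stronger structural characterization of this extremal case that the paper announces in the introduction and establishes in the body.

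The strategy is then to use that characterization to reduce $\Delta$, up to suspension (which preserves the $\gamma$-vector), to a short explicit list of combinatorial types --- typically joins of small canonical building blocks such as $C_5$'s, possibly together with a handful of sporadic exceptional spheres. The candidate polynomial equals
\[
  \gamma^\Delta(t) \;=\; 1 + (k+1)\,t + \sum_{i=2}^{k} \binom{k}{i}\, t^i \;=\; (1+t)^k + t,
\]
and I plan to show that it does not match any type on that list. For the join candidates this becomes a factorization question: since $\gamma$-polynomials multiply under joins, $(1+t)^k + t$ would have to split as a product of realizable flag-sphere $\gamma$-polynomials. Each such factor $p(t)$ itself satisfies the inequality $[t^2]\,p \le \binom{[t^1]\,p}{2}$ from Theorem~A~(i), and tracking leading versus linear coefficients in any such factorization yields a contradiction: the polynomial agrees with $(1+t)^k$ in top degree but has linear coefficient $k+1$, one more than the join of $k$ copies of $C_5$ could produce. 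The sporadic non-join entries from the classification list are then dispatched by direct inspection of their $\gamma$-vectors.

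The main obstacle is wielding the precise form of the structural classification for $\gamma_{\ell-1} = 1$, which is strictly stronger than the raw list $\{0,1,2,\ell\}$ in Theorem~A~(iv) and is exactly the ``extremal case'' result promised in the introduction. Once imported, the verification works uniformly in $k \ge 3$ and yields the promised infinite family of $f$-vectors of flag balanced complexes that fail to be $\gamma$-vectors of any flag homology sphere.
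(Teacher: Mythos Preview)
Your computation of the $\gamma$-polynomial as $(1+t)^k+t$ and the observation that $\gamma_{\ell-1}=1$ with $\ell=k+1$ are correct, but the proof plan has a genuine gap: the paper does \emph{not} establish a structural classification of flag homology spheres with $\gamma_{\ell-1}=1$. The extremal cases that are characterized in the body are $\gamma_2=\binom{\ell}{2}$ (Theorem~\ref{thm:gamma2_bounded}), $\gamma_{\ell}=1$ (Theorem~\ref{thm:bound_index}), and $\gamma_{\ell-1}=2$ under $\gamma_\ell=0$ (Theorem~\ref{thm:gamma_ell-1}). The value $\gamma_{\ell-1}=1$ is not extremal in any of these senses, and no finite ``list of types'' for it is ever produced. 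Your plan to ``import the classification list and check the polynomial against each entry'' therefore rests on a result that is neither stated nor proved, and which in fact appears harder than the theorem you are trying to establish.

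The paper's actual argument (Theorem~\ref{thm:poly}, of which the present statement is the special case $r(t)=1$) is a direct contradiction argument that never passes through a classification of the $\gamma_{\ell-1}=1$ case. One takes a minimal-dimensional $\Delta$ with $\gamma^\Delta(t)=(1+t)^k+t\,r(t)$, so that $\pi_\Delta\ge 2$. If $\pi_\Delta\ge 3$, Corollary~\ref{cor:polar_size_3_degree_gamma} forces $\pi_\Delta-1\ge 2$ to divide the top coefficient $\gamma_k=1$, a contradiction. If $\pi_\Delta=2$, the decomposition $\gamma^\Delta=\gamma^L+t\gamma^J$ from Lemma~\ref{lem:iota_equals_2_eqn}, combined with Theorem~\ref{thm:bound_index} and Lemma~\ref{lem:equators_type} on the structure of equators in $\Sigma^m\join^jC_5$, repeatedly forces $(1+t)\mid r(t)$, again a contradiction. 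The key invariant that drives the argument is the non-divisibility of $r(t)$ by $(1+t)$, not a finite enumeration of combinatorial types; your factorization idea captures a small piece of this (the subcase where $L=\Sigma J$ and $\Delta=J\join C_5$), but misses the other branches of the case analysis.
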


Currently, a guess characterization of $\gamma$-vectors of flag homology spheres or flag simplicial polytopes seems to lack.
In relation with the $g$-conjecture for homology spheres this poses the following problem.

\begin{problemintro}\label{prob:sphere/polytope}
Does there exist a vector which is the $\gamma$-vector of a flag homology sphere but not the $\gamma$-vector of a flag simplicial polytope?
\end{problemintro}

Answering ``No'' to this question but ``Yes'' for the corresponding problem on $f$-vectors may be possible, as in Problem~\ref{prob:sphere/polytope} we do not insist that the homology sphere and the polytope boundary have the same dimension. Given a flag homology sphere, the range of relevant dimensions for the flag polytopes is bounded though, due to the suspension result mentioned above; see Lemmas~\ref{lem:gamma_suspension} and~\ref{lem:bound_dim}.
We remark that in the non-simplicial case, and for the finer flag $f$-vector invariant, Brinkmann and Ziegler recently found a flag $f$-vector of a polyhedral $3$-sphere which is not the flag $f$-vector of any $4$-polytope \cite{brinkmann_small_2016}.

\medskip

{\bf Outline.} In Section~\ref{sec2:back} we set notation and give background facts on flag homology spheres.
In Section~\ref{sec3:gamma2} we prove the upper bound on $\gamma_2$ and determine the extremal cases (Theorems~\ref{thm:intro_ci}).
In Section~\ref{sec4:gamma_gamma1} we prove the upper bound on $\gamma_{\gamma_1}$, determine the extremal cases and prove a flag analog of Perles' theorem (Theorems~\ref{thm:intro_cii}, \ref{thm:intro_ciii} and~\ref{thm:intro_d}).
In Section~\ref{sec5:gamma_gamma1-1} we prove the upper bound on $\gamma_{\gamma_1-1}$, determine the extremal cases, and use it to give examples of $f$-vectors of flag complexes which are not the $\gamma$-vector of any flag homology sphere (Theorems~\ref{thm:intro_civ} and \ref{thm:intro_e}).

{\bf Acknowledgements.} The authors are thankful to the referees and Marc Masdeu for helpful suggestions and corrections to the manuscript.

\section{Background and definitions}~\label{sec2:back}
\subsection{Simplicial complexes}
Let $n\in\{1,2,\dots\}$, a \defn{simplicial complex} $\Delta$ is a collection of subsets, called \defn{faces}, of $[n]:=\{1,\dots, n\}$ closed under containment.
A face $f$ of cardinality $|f|=k$ is said to have \defn{dimension}~$k-1$.
Faces of dimension $0$ (respectively of dimension $1$) are called \defn{vertices} (resp. \defn{edges}).
We denote the set of vertices of $\Delta$ as~$\verti(\Delta)$ and the set of edges as $\edge(\Delta)$.
The degree $\deg(v)$ of a vertex $v$ of $\Delta$ is the number of edges of $\Delta$ containing $v$.

The \defn{empty complex} is the simplicial complex containing only the empty set as a face.
A subcomplex $\Delta'\subseteq\Delta$ is called \defn{induced} if for each face $f\in\Delta$ such that $f\subseteq \verti(\Delta')$ then $f\in \Delta'$.
The inclusion maximal faces are called \defn{facets}.
For a finite set $f$ we denote by $<f>$ the simplicial complex with the unique facet $f$.
Given a face $f\in\Delta$, the~\defn{star},~\defn{deletion}, and~\defn{link} of~$f$ in~$\Delta$ are the following subcomplexes of $\Delta$, where $\sqcup$ denotes the disjoint union:
\begin{align*}
\st_\Delta(f) & := \{f' \in \Delta : f\cup f' \in \Delta \}, \\
\del_\Delta(f) & := \{f'\in \Delta : f\not\subseteq f'\}, \\
\lk_\Delta(f) & := \{f' \in \Delta : f \sqcup f' \in \Delta\}.
\end{align*}
It is important to notice that the deletion of a face $f$ does not remove the faces properly contained in $f$.
We extend the definition of deletion to any subcomplex $\Gamma$ of $\Delta$, by deleting all vertices of $\Gamma$, namely $\del_\Delta(\Gamma):=\bigcap_{v\in \verti(\Gamma)} \del_\Delta(v) = \{f\in \Delta: f\subseteq [n]\setminus \verti(\Gamma)\}$.

A simplicial complex $\Delta$ is \defn{pure} if all facets of $\Delta$ have the same cardinality~$d$, in which case we say that $\Delta$ has \defn{dimension} $d-1$.
We say that a pure $(d-1)$-dimensional simplicial complex $\Delta$ is a \defn{pseudomanifold} if every face of size $(d-1)$ is contained in either 1 or 2 facets.
To each face of dimension $k-1$ of a simplicial complex, we can associate a geometric simplex of dimension~$k-~1$.
The \defn{geometric realization} $||\Delta||$ of $\Delta$ is the unique topological space, up to homeomorphism, obtained by gluing the geometric simplices corresponding to faces of $\Delta$ along their intersections.
If the geometric realization $||\Delta||$ is homeomorphic to a sphere, we say that $\Delta$ is a \defn{simplicial sphere}.
Moreover, given a pure $(d-1)$-dimensional simplicial complex $\Delta$ and a field $\mathbb{K}$, we say that $\Delta$ is a \defn{homology sphere} (over $\mathbb{K}$) if for all faces $f\in \Delta$ we have
\[
	\widetilde{H}_i(\lk_\Delta(f),\mathbb{K})=\begin{cases} 0 & \text{if } i<d-1-|f|, \\ \mathbb{K} & \text{if } i=d-1-|f|,\end{cases}
\]
where $\widetilde{H}_*(\Gamma,\mathbb{K})$ denotes the reduced singular homology of $||\Gamma||$ with coefficients in $\mathbb{K}$.
A pure $(d-1)$-dimensional simplicial complex $B$ is a \defn{homology ball} over the field $\mathbb{K}$ if (i) for all faces $f\in \Delta$ the link $\lk_\Delta(f)$ is either a $(d-1-|f|)$-dimensional homology sphere over $\mathbb{K}$ or is homologically $\mathbb{K}$-acyclic, and (ii) the faces of $B$ with acyclic link form a subcomplex which is a $(d-2)$-dimensional homology sphere over $\mathbb{K}$.
Given a homology sphere $\Delta$, an induced codimension-1 homology sphere $\Gamma\subseteq\Delta$ is called an \defn{equator} of~$\Delta$.
By Jordan--Alexander theorem, deleting an equator $\Gamma$ from a flag homology sphere $\Delta$, we obtain two disjoint acyclic complexes by $\del_\Delta(\Gamma)^+$ and $\del_\Delta(\Gamma)^-$.
Then, deleting each part from $\Delta$ respectively, we get two homology balls
$\Delta_\Gamma^+$ and $\Delta_\Gamma^-$, called \defn{hemispheres},
intersecting in $\Gamma$, and $\Delta$ decomposes as $\Delta=\Delta_\Gamma^+ \cup_\Gamma \Delta_\Gamma^-$.

Here are some essential operations on simplicial spheres.
The~\defn{join} of two simplicial complexes~$\Delta_1$ and $\Delta_2$ is the simplicial complex
\[
\Delta_1\join \Delta_2:=\{f_1\sqcup f_2 : f_1\in \Delta_1,f_2\in \Delta_2\}.
\]
We denote the $k$-fold join of a fixed simplicial complex $\Delta$ by $\join^k \Delta$, i.e. $\join^0\Delta$ is the empty complex and $\join^1\Delta=\Delta$.
The \defn{suspension} $\susp \Delta$ of a simplicial complex $\Delta$ is the join of $\Delta$ with a $0$-dimensional sphere $S^0$, i.e. the simplicial complex with two disjoint vertices,
\[
 \susp\Delta := S^0 \join \Delta.
\]
We denote the $k$-fold suspension $\underbrace{\Sigma\cdots\Sigma}_{k \text{ times}}\Delta$ by $\susp^k\Delta$.
The $k$-fold suspension of the empty complex is the $(k-1)$-dimensional simplicial complex isomorphic to the boundary complex of the so-called cross-polytope of dimension $k$, also called \defn{octahedral sphere}.
Given an edge $e=\{a,b\}$ of a simplicial complex $\Delta$, define the function $\kappa_e:[n]\rightarrow [n]$ sending $i\mapsto i$ if $i\notin \{a,b\}$ and $i\mapsto a$ otherwise. The \defn{contraction} of $e$ in $\Delta$ is the simplicial complex
\[
 \Delta^{\downarrow e} := \{\kappa_e(f): f\in \Delta\}.
\]

A \defn{non-face} of a simplicial complex $\Delta$ is a set of vertices of~$\Delta$ which is not a face of $\Delta$.
A \defn{missing face} (also called minimal non-face) of $\Delta$ is a non-face $g$ such that each proper subset of $g$ is a face of $\Delta$.
In this article, we always assume that all vertices are faces, that is, missing faces of simplicial complexes have dimension at least 1.
When all missing faces of $\Delta$ have dimension~$1$, the simplicial complex $\Delta$ is called \defn{flag}.
We denote by $C_k$ the $1$-dimensional simplicial sphere with~$k$ vertices, i.e. the $k$-cycle graph.

The following lemma gathers known facts on flag homology spheres.

\begin{lemma}\label{lem:elementary}
Let $\Delta$ be a $(d-1)$-dimensional flag homology sphere, $f\in\Delta$, $e\in \edge(\Delta)$, and $\Gamma$ be an equator of $\Delta$.
\begin{enumerate}[label=\roman{enumi}),ref=\ref{lem:elementary}~\roman{enumi})]
 \item The link $\lk_\Delta(f)$ is a flag induced homology sphere. \label{lem:link_flag}
 \item The contraction $\Delta^{\downarrow e}$ is a flag homology sphere if and only if $e$ is not contained in an induced $4$-cycle. \label{lem:contract_flag}
 \item If $d\geq 3$ and the link of every vertex of $\Delta$ is an octahedral sphere, then $\Delta$ is an octahedral sphere. \label{lem:octahedral_links}
 \item The deletion $\del_{\Delta}(\Gamma)$ is homologically equivalent to a $0$-dimensional sphere, i.e. \label{lem:alex_dual}
 \[
 \widetilde{H}_{*}(\del_{\Delta}(\Gamma),\mathbb{K}) \cong \widetilde{H}_{*}(S^0,\mathbb{K}),
 \]
 separated in $\Delta$ by $\Gamma$.
\end{enumerate}
\end{lemma}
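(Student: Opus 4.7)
The four parts of the lemma are largely independent; my plan is to prove them in the order (i), (iv), (ii), (iii), roughly by increasing technical difficulty, and I expect part (iii) to be the main obstacle.

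Part (i) is essentially unpacking definitions. The identity $\lk_{\lk_\Delta(f)}(g)=\lk_\Delta(f\cup g)$ transfers the homology-sphere condition from $\Delta$ to each link. Flagness transfers as well: any missing face $g$ of $\lk_\Delta(f)$ lifts to a missing face $g\cup f$ of $\Delta$, which by flagness of $\Delta$ must have size two, forcing $f=\emptyset$ and $|g|=2$. Inducedness follows by the same token: a clique in the $1$-skeleton of $\lk_\Delta(f)$ together with $f$ is a clique in $\Delta$, hence a face by flagness, hence a face of the link.

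Part (iv) is the Jordan--Alexander statement recalled just before the lemma: the equator $\Gamma$ separates $\Delta$ into two acyclic hemispheres, and $\del_\Delta(\Gamma)$ is exactly the disjoint union of the interiors of those two pieces (i.e.\ the two sides with $\Gamma$ itself removed), hence has the reduced homology of $S^0$.

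For part (ii), I would split topology from combinatorics. Topologically, the \emph{link condition} $\lk_\Delta(\{a,b\})=\lk_\Delta(a)\cap\lk_\Delta(b)$ holds in any flag complex---one containment is tautological; the other uses flagness applied to triples $\{a,b,v\}$---and classical PL theory then implies that $\Delta^{\downarrow e}$ is homeomorphic to $\Delta$, so in particular remains a homology sphere. Combinatorially, I would classify the potentially new missing faces of $\Delta^{\downarrow e}$: such a face arises from a clique $\{a,w_1,\ldots,w_k\}$ in the contracted $1$-skeleton that is not a face of $\Delta$. Partitioning the $w_i$ by adjacency to $a$ versus to $b$ in $\Delta$, unless all $w_i$ share an endpoint (in which case flagness of $\Delta$ supplies the face), one gets $w_i\in N(a)\setminus N(b)$ and $w_j\in N(b)\setminus N(a)$, whence $\{a,w_i,b,w_j\}$ is an induced $4$-cycle through $e$. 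The converse---an induced $4$-cycle through $e$ produces a missing triangle after contraction---is a direct check.

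For part (iii), I would induct on $d$. The base case $d=3$ is a flag $2$-sphere with every vertex link the $4$-cycle $C_4$; since each vertex has degree $4$ and each edge lies in two triangles, the Euler characteristic equation forces $6$ vertices, and the only such complex is the octahedron. For the inductive step at $d\geq 4$, fix a vertex $v$; its link $\lk_\Delta(v)$ is octahedral, and because $\Delta$ is a sphere rather than a ball $v$ has at least one non-neighbor~$v^*$. The key step, which I expect to be the main obstacle, is to show that $v^*$ is unique and that $\lk_\Delta(v^*)=\lk_\Delta(v)$. For this I would exploit the observation that for each $u\in\lk_\Delta(v)$ the vertex $v$ lies in the octahedral sphere $\lk_\Delta(u)$, so it has a unique antipode there, which must be a non-neighbor of $v$ adjacent to $u$; tracking the consistency of these local antipodes across the connected $1$-skeleton of $\Delta$ (using that edge links $\lk_\Delta(\{u,v\})$ are themselves octahedral, being vertex links inside octahedral vertex links) should identify one global non-neighbor $v^*$ incident to every $u\in\lk_\Delta(v)$. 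Once $\lk_\Delta(v^*)=\lk_\Delta(v)$ is in hand, $\Delta=\susp\lk_\Delta(v)$ with suspension vertices $v,v^*$, hence $\Delta$ is octahedral.
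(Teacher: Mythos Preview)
Your outline is largely sound, but there is one genuine gap. In part~(ii) you appeal to ``classical PL theory'' to conclude that $\Delta^{\downarrow e}$ is homeomorphic to $\Delta$ once the link condition $\lk_\Delta(e)=\lk_\Delta(a)\cap\lk_\Delta(b)$ holds. However, a homology sphere need not be a PL manifold (or even a topological manifold), so PL contraction results do not apply as stated. The paper handles this by citing Nevo--Novinsky \cite[Proposition~2.3]{nevo_characterization_2011}, who prove \emph{homologically} that under the link condition the edge contraction of a homology sphere is again a homology sphere; that is the reference you need. Your combinatorial argument for flagness preservation is correct and is essentially what the paper cites from Lutz--Nevo.

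For parts~(i) and~(iv) you match the paper. One wrinkle in~(i): the claim that a missing face $g$ of $\lk_\Delta(f)$ ``lifts to a missing face $g\cup f$ of~$\Delta$'' is not literally right---$g\cup f$ is only a non-face a~priori, and your stated conclusion $f=\emptyset$ is absurd for nonempty $f$. Your subsequent inducedness argument (a clique on $g\cup f$ is a face of $\Delta$ by flagness, hence $g$ is a face of the link) is exactly the paper's, and it already yields flagness of the link; just drop the first sentence.

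For part~(iii) the paper simply cites \cite[Lemma~5.3]{nevo_gamma_2011}, whereas you sketch a direct induction; this is a genuinely different and more self-contained route. Your key step does work: for adjacent $u,u'\in\lk_\Delta(v)$, the antipode $w_u$ of $v$ in the octahedral $\lk_\Delta(u)$ is adjacent there to every vertex except $v$, in particular to $u'$, so $w_u\in\lk_\Delta(u')$; being a non-neighbor of $v$ in that induced octahedral sphere forces $w_u=w_{u'}$. Connectedness of $\lk_\Delta(v)$ then gives a single $v^*$ adjacent to all of $\lk_\Delta(v)$, and a vertex count yields $\lk_\Delta(v^*)=\lk_\Delta(v)$. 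To finish you must still argue that the full-dimensional homology sphere $\{v,v^*\}\join\lk_\Delta(v)$ sitting inside $\Delta$ exhausts~$\Delta$; this follows from strong connectedness of homology spheres (each ridge lies in exactly two facets, and these already occur in the subsphere), and you should say so explicitly.
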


\begin{proof}
i) Let $L:=\lk_\Delta(f)$ and $g\in \Delta$ such that $g\subseteq \verti(L)$.
Since every vertex of $g$ joined with $f$ is a face of $\Delta$ then the $1$-skeleton of $<g>\join <f>$ is in $\Delta$.
As the $1$-skeleton of $<g>\join <f>$ is a clique, $g\cup f$ is a face of $\Delta$ by flagness.
Therefore, $g$ is also a face of $L$, thus $L$ is induced and its flagness follows.

ii)
The ``only if'' direction is trivial.
For the other direction, assume $e$ is not contained in any  induced $4$-cycle.
Nevo and Novinsky proved that for any homology sphere $\Delta$, its edge contraction is again a  homology sphere if the link of the contracted edge $e$ is the intersection of the links of its two vertices \cite[Proposition~2.3]{nevo_characterization_2011}.
Since this is true for flag complexes, $\Delta^{\downarrow e}$ is a homology sphere.
Finally, the fact that flagness is preserved was proved by Lutz and Nevo, in the proof of  \cite[Corollary~6.2]{lutz_stellar_2016}.

iii) See \cite[Lemma~5.3]{nevo_gamma_2011} for a proof.

iv) Since $\Delta$ is a flag pseudomanifold without boundary, each facet of $\Gamma$ is contained in exactly two facets of $\Delta$, and neither of them have all their vertices in $\Gamma$.
Hence $\del_{\Delta}(\Gamma)$ contains at least two vertices, which are not connected by an edge since $\Delta$ is flag and $\Gamma$ is an equator of $\Delta$.
By Alexander duality on $\Delta$ and $\Gamma$, the result follows.
\end{proof}

For general homology spheres,
given a vertex $v$ of a homology sphere $\Delta$, the \defn{vertex split} of $v$ in~$\Delta$ along an equator $J$ of $\lk_\Delta(v)$ is the simplicial complex
\[
 \Delta^{v\vee J} := \del_\Delta(v) \cup \left(<\{v^+,v^-\}>\join J \right) \cup \left(<\{v^+\}>\join \lk_\Delta(v)_J^+ \right) \cup \left(<\{v^-\}>\join \lk_\Delta(v)_J^-\right),
\]
where $v^+,v^-$ are distinct vertices not in $\Delta$.
A special case of vertex split is when the equator $J$ is the link of an edge $e$ containing $v$, in which case it is the \defn{stellar subdivision} $\Delta^{\uparrow e}$ of that edge.
Note that for an edge $uv=e\in \Delta$, we have $(\Delta^{\uparrow e})^{\downarrow v^+v^-}=\Delta$.

\subsection{$f$- and $h$-vectors}

We introduce here basic notions on facial enumeration of simplicial complexes, see \cite{stanley_combinatorics_1996} for more details.

Given a pure simplicial complex $\Delta$ of dimension $d-1$, the $f$-vector $(f_0,f_1,\dots, f_{d})$ records the number $f_i$ of faces of cardinality $i$, for $0\leq i\leq d$, where $f_{0}=1$.
One may inscribe the $f$-vector in a polynomial
\[
 f^\Delta(z) := \sum_{i=0}^{d}f_iz^i.
\]
The shift $z\rightarrow z-1$ of the reciprocal polynomial of $f^\Delta(z)$ is called the $h$-vector of $\Delta$:
\[
 h^\Delta(z) := (z-1)^d f^\Delta((z-1)^{-1}) = \sum_{i=0}^dh_iz^i,
\]
whose coefficients are the linear combinations
\[
 h_k= \sum_{i=0}^k(-1)^{k-i}\binom{d-i}{d-k}f_{i},
\]
for $0\leq k\leq d$.

When $\Delta$ is a homology sphere, the generalized Dehn-Sommerville relations \cite{klee_combinatorial_1964} imply that the $h$-polynomial is self-reciprocal, that is
\[
 h^\Delta(z) = z^{d} h^\Delta(z^{-1}),
\]
and the sequence of coefficients of $h^\Delta(z)$ forms a palindromic sequence.
Once the $h$-vector is defined and seen to be palindromic, it is usual to rewrite the $h$-vector as the $g$-vector recording the difference of the consecutive entries in the first half of the $h$-vector: $g_0=1, g_i=h_i-h_{i-1}$ for $1\leq i\leq \lfloor d/2 \rfloor$.
It turns out that expressing upper and lower bounds on the number of faces can be expressed easily in terms of the entries of the $g$-vector and further other important combinatorial and geometric properties are cast into the $g$-vector.
See \cite{stanley_combinatorics_1996,ziegler_lectures_1995,murai_generalized_2013} for more details on the extremal values of $g$ and $h$-vectors.

\subsection{$\gamma$-vector of flag homology spheres}

In the particular case of flag homology spheres, another vector is conjectured to play a similar role as the $g$-vector.
Consider the $h$-polynomial of a flag homology sphere $\Delta$ of dimension~$d-1$.
By the Dehn-Sommerville relations,
 we can express $h^{\Delta}(z)$ in the basis
  \[
 \gimel_i:=z^i(1+z)^{d-2i},
\]
for $0\leq i \leq \lfloor \frac{d}{2}\rfloor$, and write it as
\[
 h^\Delta(z) = \sum_{i=0}^{\lfloor \frac{d}{2}\rfloor} \gamma_i^{\Delta} \gimel_i.
\]
Gal's $\gamma$-polynomial is then
\[
 \gamma^\Delta(t):= \sum_{i=0}^{\lfloor \frac{d}{2}\rfloor} \gamma_i^{\Delta}t^i.
\]
Let us emphasize that we switched the variable from $z$ to $t$; making it clear that a monomial $t^i$ in the $\gamma$-vector gives a shifted binomial power $z^i(1+z)^{d-2i}$ in the polynomial ring $\mathbb{Z}[z]$.
When the simplicial complex is clear from the context, we abuse language and simply write $\gamma_i$.
Gal conjectured that when $\Delta$ is a flag homology sphere, the coefficients $\gamma_i$ are nonnegative \cite[Conjecture~2.1.7]{gal_real_2005}.
Lately, this conjecture attracted a lot of attention.
It implies the Charney--Davis conjectures \cite{charney_euler_1995}.
For a recent survey concerning this topic, see \cite[Chapter~8-10]{petersen_eulerian_2015}.

The following basic lemma, proved by a direct computation, expresses the first entries of the $\gamma$-vector in terms of the $f$-vector.

\begin{lemma}\label{lem:gamma_values}
Let $\Delta$ be a $(d-1)$-dimensional
homology sphere
with $f_1$ vertices and $f_2$ edges. The entries of the $\gamma$-vector $\gamma^\Delta(t)$ satisfy
\begin{align*}
 \gamma_0 = 1, \qquad \gamma_1 = f_1 -2d, \qquad \gamma_2 = f_2 -(2d-3)f_1 +2d(d-2).
\end{align*}
Further,
\[
 \alpha+\gamma_2 = \frac{\gamma_1(\gamma_1+5)}{2}+d,
\]
where $\alpha$ is the number of missing edges of $\Delta$.
\end{lemma}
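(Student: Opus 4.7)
The plan is to compute everything by matching coefficients in two polynomial identities. First I would extract $h_0, h_1, h_2$ from the definition $h^\Delta(z) = (z-1)^d f^\Delta((z-1)^{-1})$ (equivalently, the displayed formula $h_k=\sum_{i=0}^k(-1)^{k-i}\binom{d-i}{d-k}f_i$), using $f_0=1$, to recover the standard expressions
\[
h_0 = 1, \qquad h_1 = f_1 - d, \qquad h_2 = f_2 - (d-1)f_1 + \binom{d}{2}.
\]

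Next I would expand the right-hand side of $h^\Delta(z) = \sum_{i=0}^{\lfloor d/2\rfloor} \gamma_i\, z^i(1+z)^{d-2i}$ through order $z^2$ by the binomial theorem. The contribution of $\gamma_i z^i(1+z)^{d-2i}$ to the coefficient of $z^k$ is $\gamma_i\binom{d-2i}{k-i}$, yielding the triangular system
\[
h_0 = \gamma_0, \qquad h_1 = d\gamma_0 + \gamma_1, \qquad h_2 = \binom{d}{2}\gamma_0 + (d-2)\gamma_1 + \gamma_2.
\]
Solving top-down gives $\gamma_0=1$, then $\gamma_1 = h_1 - d = f_1-2d$, and finally $\gamma_2 = h_2 - \binom{d}{2} - (d-2)\gamma_1$, which simplifies to $f_2-(2d-3)f_1+2d(d-2)$ after substituting $\gamma_1=f_1-2d$ and collecting the $\binom{d}{2}$ terms.

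For the last relation, I would use $\alpha = \binom{f_1}{2} - f_2$ (total pairs minus edges) to eliminate $f_2$ from the formula for $\gamma_2$, obtaining $\alpha+\gamma_2=\binom{f_1}{2}-(2d-3)f_1+2d(d-2)$, and then substitute $f_1=\gamma_1+2d$ to convert the right-hand side to a polynomial in $\gamma_1$ with $d$-dependent coefficients. Expanding gives a $\gamma_1^2$ term with coefficient $\tfrac12$, a linear term whose coefficient collapses from $\tfrac{4d-1}{2}-(2d-3)$ to $\tfrac52$, and a constant whose coefficient of $d$ reduces from $(2d-1)-2(2d-3)+2(d-2)$ to $1$. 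This yields $\tfrac12\gamma_1^2+\tfrac52\gamma_1+d=\tfrac{\gamma_1(\gamma_1+5)}{2}+d$, as claimed.

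There is no real obstacle here beyond careful bookkeeping: the only subtlety is reading off $\binom{d-2i}{k-i}$ correctly from the shifted binomial $z^i(1+z)^{d-2i}$ and confirming that the $d$-dependent pieces cancel in the final identity, which they do by direct arithmetic.
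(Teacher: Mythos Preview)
Your proposal is correct and is precisely the ``direct computation'' the paper alludes to (the paper gives no further details). Every step checks out: the values of $h_0,h_1,h_2$, the triangular system from expanding $z^i(1+z)^{d-2i}$, the resulting formulas for $\gamma_0,\gamma_1,\gamma_2$, and the final substitution $f_1=\gamma_1+2d$ after using $\alpha=\binom{f_1}{2}-f_2$ all simplify exactly as you indicate.
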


As a consequence, proving that $\gamma_2$ is non-negative is equivalent to proving that $\alpha\leq \frac{\gamma_1(\gamma_1+5)}{2}+d$.
Here are lemmas containing known facts on $\gamma$-vectors used later, see \cite[Section~5.2]{nevo_higher_2007}, \cite[Lemma~2.3]{aisbett_gamma_2012}, \cite[Section~5]{nevo_gamma_2011}, \cite[Corollary~1]{volodin_cubical_2010}.

\begin{lemma}\label{lem:gamma_eqns}
Let $\Delta$ be a $(d-1)$-dimensional flag homology sphere and $\Delta^{\downarrow e}$ be the contraction of an edge $e$ of $\Delta$ not contained in an induced $4$-cycle.
\begin{enumerate}[label=\roman{enumi}),ref=\ref{lem:gamma_eqns}~\roman{enumi})]
 \item The $\gamma$-polynomials $\gamma^\Delta(t)$ and $\gamma^{\Sigma\Delta}(t)$ are equal. \label{lem:gamma_suspension}
 \item The $\gamma$-polynomials of $\Delta$ and $\Delta^{\downarrow e}$ satisfy \label{lem:gamma_vertsplit}
\[
 \gamma^{\Delta}(t) = \gamma^{\Delta^{\downarrow e}}(t) +t\gamma^{\lk_\Delta(e)}(t).
\]
\end{enumerate}
\end{lemma}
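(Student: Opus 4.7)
The plan is to derive both identities at the level of $h$-polynomials and then transfer to $\gamma$-polynomials using the unique expansion in the basis $\gimel_i = z^i(1+z)^{d-2i}$ guaranteed (for a $(d-1)$-dimensional homology sphere) by the Dehn--Sommerville relations.

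For part (i), I would invoke the standard multiplicativity of the $h$-polynomial under joins, $h^{\Delta_1 \ast \Delta_2}(z) = h^{\Delta_1}(z)\,h^{\Delta_2}(z)$, which is an immediate consequence of $f^{\Delta_1 \ast \Delta_2}(z) = f^{\Delta_1}(z)\,f^{\Delta_2}(z)$ together with the definition of $h$. Applied to $\Sigma\Delta = S^0 \ast \Delta$, with $h^{S^0}(z) = 1+z$, this gives $h^{\Sigma\Delta}(z) = (1+z)\,h^\Delta(z)$. Multiplication by $(1+z)$ sends the basis element $z^i(1+z)^{d-2i}$ (adapted to dimension $d-1$) to $z^i(1+z)^{(d+1)-2i}$ (adapted to dimension $d$), so the expansion coefficients match termwise and $\gamma_i^{\Sigma\Delta} = \gamma_i^\Delta$ for all $i$.

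For part (ii), write $e = \{u,v\}$ and $\Delta' = \Delta^{\downarrow e}$. The first step is a direct face count. Faces of $\Delta$ disjoint from $\{u,v\}$ correspond bijectively to faces of $\Delta'$ avoiding the merged vertex, so they contribute nothing to the difference. For faces of $\Delta'$ through the merged vertex, an inclusion--exclusion over the two preimages $\{u\}\cup g$ and $\{v\}\cup g$ (with $g \subseteq V(\Delta)\setminus\{u,v\}$), using flagness of $\Delta$ and $uv \in \Delta$ to identify the intersection with faces of $\lk_\Delta(e)$, yields the $f$-polynomial identity
\[
  f^\Delta(z) - f^{\Delta'}(z) \;=\; z(1+z)\,f^{\lk_\Delta(e)}(z).
\]
Substituting $z \mapsto (z-1)^{-1}$, multiplying by $(z-1)^d$, and using $\dim \lk_\Delta(e) = d-3$ to recognise $(z-1)^{d-2} f^{\lk_\Delta(e)}((z-1)^{-1}) = h^{\lk_\Delta(e)}(z)$, this translates into the clean $h$-polynomial identity
\[
  h^\Delta(z) - h^{\Delta'}(z) \;=\; z\,h^{\lk_\Delta(e)}(z).
\]

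Since $z \cdot z^i(1+z)^{d-2-2i} = z^{i+1}(1+z)^{d-2(i+1)}$, the right-hand side expands in the $\gimel$-basis appropriate for dimension $d-1$ as $\sum_i \gamma_i^{\lk_\Delta(e)}\,\gimel_{i+1}$, so equating $\gimel$-coefficients delivers $\gamma^\Delta(t) = \gamma^{\Delta'}(t) + t\,\gamma^{\lk_\Delta(e)}(t)$. The most delicate step I anticipate is the inclusion--exclusion in the face count: one must verify that whenever $\{u\}\cup g$ and $\{v\}\cup g$ both lie in $\Delta$, flagness together with $uv \in \Delta$ forces $\{u,v\}\cup g \in \Delta$, so that the overlap is counted exactly by $f^{\lk_\Delta(e)}$. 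Note that the no-induced-$4$-cycle hypothesis is not needed for the $h$-polynomial identity itself; it enters only through Lemma~\ref{lem:contract_flag} to ensure that $\Delta'$ is again a (flag) homology sphere, so that $\gamma^{\Delta'}(t)$ is defined.
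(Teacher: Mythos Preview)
Your argument is correct. The paper itself does not prove this lemma; it simply records it as a collection of known facts with citations to \cite{nevo_higher_2007}, \cite{aisbett_gamma_2012}, \cite{nevo_gamma_2011}, and \cite{volodin_cubical_2010}. Your self-contained derivation via the $f$- and $h$-polynomial identities is the standard route and matches what one finds in those references. The inclusion--exclusion step you flag as delicate is exactly right: flagness of $\Delta$ together with $uv\in\Delta$ ensures $A_k\cap B_k=C_k$ in the obvious notation, giving $f_j^\Delta-f_j^{\Delta'}=f_{j-1}^{\lk_\Delta(e)}+f_{j-2}^{\lk_\Delta(e)}$ and hence your $f$-identity. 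Your closing remark is also accurate: the no-induced-$4$-cycle hypothesis is needed only so that $\Delta^{\downarrow e}$ is again a homology sphere (via Lemma~\ref{lem:contract_flag}), making its $h$-polynomial palindromic and its $\gamma$-expansion well defined; the raw $f$- and $h$-identities hold for any flag complex and any edge.
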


\begin{lemma}[{\cite[Lemma~2.1.14]{gal_real_2005},\cite{meshulam_domination_2003}}]\label{lem:gamma_zero}
Let $\Delta$ be a $(d-1)$-dimensional flag homology sphere.
The $\gamma$-vector of $\Delta$ is $\gamma^\Delta(t)=1$ if and only if $\Delta$ is an octahedral sphere.
\end{lemma}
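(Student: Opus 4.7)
The plan is to handle the two directions separately, using induction on $d$ for the non-trivial direction. For the ``if'' direction, a direct computation suffices: the $(d-1)$-dimensional octahedral sphere has $h$-polynomial $(1+z)^d = \gimel_0$, hence $\gamma^\Delta(t) = 1$ by definition of the $\gamma$-expansion.

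For the ``only if'' direction, I induct on $d$. The base case $d = 1$ is trivial, as any flag homology $0$-sphere is $S^0$. For the inductive step with $d \geq 2$, assume $\gamma^\Delta(t) = 1$. Lemma~\ref{lem:gamma_values} combined with $\gamma_1 = \gamma_2 = 0$ yields $f_1 = 2d$, $\alpha = d$ missing edges, and $f_2 = 2d(d-1)$. Hence the sum of vertex degrees is $4d(d-1)$, so the average degree equals $2d-2$. Crucially, no vertex $v$ can be adjacent to all $2d - 1$ other vertices: in a flag complex such a $v$ would force $\Delta$ to be a cone over $\lk_\Delta(v)$, hence contractible, contradicting that $\Delta$ is a homology sphere. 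Therefore every vertex has degree at most $2d-2$, and by the averaging argument each vertex has degree exactly $2d-2$, i.e., exactly one non-neighbor, which I denote $v^*$.

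The pair $\{v, v^*\}$ is then a missing edge, and flagness forces $\lk_\Delta(v)$ to coincide with the induced subcomplex on $\verti(\Delta)\setminus\{v,v^*\}$; in particular $\lk_\Delta(v) = \lk_\Delta(v^*)$. Partitioning faces of $\Delta$ according to which of $v, v^*$ they contain---noting that no face contains both---shows $\Delta = \susp \lk_\Delta(v)$, with $v$ and $v^*$ as the suspension poles. Lemma~\ref{lem:link_flag} then states that $\lk_\Delta(v)$ is itself a flag homology sphere, and Lemma~\ref{lem:gamma_suspension} gives $\gamma^{\lk_\Delta(v)}(t) = \gamma^\Delta(t) = 1$. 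The induction hypothesis identifies $\lk_\Delta(v)$ as a $(d-2)$-dimensional octahedral sphere, so $\Delta = \susp\lk_\Delta(v)$ is the $(d-1)$-dimensional octahedral sphere.

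The main obstacle I foresee is cleanly isolating the antipodal pair, specifically the exclusion of cone vertices; once that is in hand, the rest follows from suspension invariance of $\gamma$ and induction. Everything else---the face-number identities, flagness and homology-sphere property of the link, the suspension identity---is already packaged in the lemmas cited.
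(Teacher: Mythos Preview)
The paper does not supply its own proof of this lemma; it is quoted from \cite{gal_real_2005,meshulam_domination_2003}. So there is nothing to compare against, and the relevant question is simply whether your argument is correct. It is.

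One small wrinkle: you invoke $\gamma_2=0$ via Lemma~\ref{lem:gamma_values}, but for $d\in\{2,3\}$ the $\gamma$-polynomial has degree at most~$1$, so ``$\gamma_2$'' is not literally an entry. This is harmless, since $\gamma^\Delta(t)=1$ forces $h^\Delta(z)=(1+z)^d$ and hence $f^\Delta(z)=(1+2z)^d$, giving $f_2=2d(d-1)$ directly for all $d$; alternatively you can handle $d=2,3$ by inspection. You could also bypass the averaging step altogether: once $f_1=2d$, the link $\lk_\Delta(v)$ is a $(d-2)$-dimensional flag homology sphere on $2d-1-\iota(v)$ vertices, so $2d-1-\iota(v)\ge 2(d-1)$ forces $\iota(v)\le 1$, while your cone argument gives $\iota(v)\ge 1$. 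Either way the conclusion $\iota(v)=1$ for every vertex, and hence $\Delta=\Sigma\lk_\Delta(v)$, goes through and the induction finishes the job.
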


\section{Antipode number, polar size and upper bound for $\gamma_2$}\label{sec3:gamma2}

\subsection{Definitions}

In this section, we formalize the notions of \emph{antipode number} and \emph{polar size} of flag homology spheres and give some elementary properties related to their $\gamma$-vectors.
This notion was used previously, for instance in \cite[Section~5]{nevo_gamma_2011}.

\begin{definition}
Let $\Delta$ be a flag homology sphere and $v\in \verti(\Delta)$.
An \defn{antipode} of $v$ is a vertex $w\in \verti(\Delta)$ such that $\{v,w\}$ is a missing edge of $\Delta$.
The \defn{antipode number} $\iota(v)$ is the number of antipodes of $v$.
The \defn{polar size} $\pi_{\Delta}$ of $\Delta$ is the minimal antipode number $\pi_{\Delta}:=\min_{v\in \verti(\Delta)}\iota(v)$ among vertices of $\Delta$.
\end{definition}

The sum of the antipode numbers of a flag homology sphere divided by 2 is its number of missing edges, i.e.
\[
 \binom{f_1}{2} = f_2 +\frac{1}{2}\sum_{v\in \verti(\Delta)}\iota(v).
\]
Therefore, studying flag homology spheres via their antipode numbers seems to be a natural way to understand their
facial enumeration.
The following lemma gives information about the possible polar sizes for flag homology spheres.

\begin{lemma}\label{lem:polar_values}
Let $\Delta$ be a $(d-1)$-dimensional flag homology sphere on $2d+\ell$ vertices.
\begin{enumerate}[label=\roman{enumi}),ref=\ref{lem:polar_values}~\roman{enumi})]
 \item $\gamma_1^{\lk_\Delta(v)}=\ell-\iota(v)+1$, for all $v\in \verti(\Delta)$, \label{lem:polar_values_i}
 \item $\pi_\Delta\in\{1,\dots,\ell+1\}$, \label{lem:polar_values_ii}
 \item $\pi_\Delta=1$ if and only if $\Delta$ is a suspension of a $(d-2)$-dimensional flag homology sphere, \label{lem:polar_values_iii}
 \item if $d\geq 3$, then $\pi_\Delta=\ell+1$ if and only if $\ell=0$, i.e., $\Delta$ is an octahedral sphere. \label{lem:polar_values_iv}
\end{enumerate}
\end{lemma}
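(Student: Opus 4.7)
The plan is to handle the four parts in the order stated, with (i) serving as the computational engine for (ii), (iii), and (iv). For (i), I would use that $\verti(\lk_\Delta(v))$ is precisely the neighborhood of $v$, so $\deg(v) = f_1 - 1 - \iota(v) = 2d + \ell - 1 - \iota(v)$, and that $\lk_\Delta(v)$ is a $(d-2)$-dimensional flag homology sphere by Lemma~\ref{lem:link_flag}; then the formula of Lemma~\ref{lem:gamma_values} (applied with $d$ replaced by $d-1$) yields $\gamma_1^{\lk_\Delta(v)} = \deg(v) - 2(d-1) = \ell + 1 - \iota(v)$. Part (ii) is then immediate: the upper bound $\iota(v) \leq \ell + 1$ reflects the nonnegativity of $\gamma_1$ for the link, and for the lower bound $\iota(v) \geq 1$ I would argue by contradiction --- if $v$ had no antipode then flagness would force $\{v\} \cup F$ to be a face for every face $F$, so $\Delta$ would be a cone over $\lk_\Delta(v)$ and hence contractible, contradicting the homology sphere condition.

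For (iii), the implication ``suspension $\Rightarrow \pi_\Delta = 1$'' is immediate from the definitions. For the converse, given a vertex $v$ with unique antipode $w$, I would first observe that $\lk_\Delta(v) = \lk_\Delta(w)$: by Lemma~\ref{lem:link_flag} both are induced subcomplexes of $\Delta$ and they share the vertex set $\verti(\Delta) \setminus \{v, w\}$, hence coincide. This gives $\susp \lk_\Delta(v) \subseteq \Delta$, with $v$ and $w$ serving as the suspension vertices. The step requiring genuine argument --- and the main obstacle of the lemma --- is the reverse inclusion; I would obtain it by a pseudomanifold/dimension check: both sides are pure $(d-1)$-dimensional complexes without boundary on the same vertex set, and any facet of $\Delta$ must contain $v$ or $w$ (otherwise it would lie inside the $(d-2)$-dimensional $\lk_\Delta(v)$), which forces it to be a facet of $\susp \lk_\Delta(v)$ as well.

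Finally, (iv) follows by combining (i) with Lemmas~\ref{lem:octahedral_links} and~\ref{lem:gamma_zero}. If $\ell = 0$, Lemma~\ref{lem:gamma_zero} forces $\Delta$ to be octahedral, so each vertex has exactly one antipode and $\pi_\Delta = 1 = \ell + 1$. Conversely, $\pi_\Delta = \ell + 1$ means $\iota(v) = \ell + 1$ for every $v$, so (i) yields $\gamma_1^{\lk_\Delta(v)} = 0$, and every vertex link is octahedral by Lemma~\ref{lem:gamma_zero}; the hypothesis $d \geq 3$ then triggers Lemma~\ref{lem:octahedral_links} to conclude that $\Delta$ itself is octahedral, whence $\ell = 0$.
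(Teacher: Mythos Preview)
Your argument for parts (i), (ii), and (iv) is correct and matches the paper's proof closely; for (ii) you replace the paper's appeal to Alexander duality (Lemma~\ref{lem:alex_dual}) for the lower bound $\iota(v)\geq 1$ by the more elementary cone/contractibility argument, which works just as well.

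For part (iii) there is a small gap in the logic. You assert that $\lk_\Delta(v)$ and $\lk_\Delta(w)$ ``share the vertex set $\verti(\Delta)\setminus\{v,w\}$'', but this amounts to claiming $\iota(w)=1$, which does not follow immediately from $\iota(v)=1$: a priori $w$ could have further antipodes among the neighbours of $v$. The good news is that your own pseudomanifold argument already contains the fix. Indeed, for any facet $G$ of $\lk_\Delta(v)$, the $(d-2)$-face $G$ lies in exactly two facets of $\Delta$; one is $\{v\}\cup G$, and the other must contain $w$ (by your observation that every facet of $\Delta$ meets $\{v,w\}$) and hence equals $\{w\}\cup G$. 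This simultaneously shows $\Sigma\lk_\Delta(v)\subseteq\Delta$ and, as you argue, $\Delta\subseteq\Sigma\lk_\Delta(v)$. So the ingredients are all present---you just need to reorder them so that the facet/pseudomanifold step comes first (or is used to justify the link equality), rather than relying on an unproved claim about $\verti(\lk_\Delta(w))$. The paper, for comparison, simply writes ``straightforward, using Alexander duality'' and gives no details.
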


\begin{proof}
 i) The number of vertices in $\lk_\Delta(v)$ is $(2d+\ell)-(\iota(v)+1)$ and the dimension of the homology sphere $\lk_\Delta(v)$ is $d-2$. By Lemma~\ref{lem:gamma_values} we get the desired value.

 ii) By Lemma~\ref{lem:alex_dual}, $\pi_\Delta\geq 1$.
 Besides, as the links of vertices of $\Delta$ have to be flag homology spheres of dimension $d-2$ and need at least $2(d-1)$ vertices, the polar size $\pi_\Delta$ is bounded above by $\ell+1$.

 iii) This is straightforward, using Alexander duality.

 iv) If $\pi_\Delta=\ell+1$, then the links of all vertices of~$\Delta$ are octahedral spheres by i) and Lemma~\ref{lem:gamma_zero}.
 Then by Lemma~\ref{lem:octahedral_links}, $\Delta$ is an octahedral sphere, which implies that $\ell=0$.
 The reverse direction is straightforward from the definitions.
\end{proof}

\subsection{Polar size and $\gamma$-vectors}

We state and give a short proof of a result generalized in Theorem~\ref{thm:gamma2_bounded}.

\begin{lemma}[{\cite[Proposition~5.4]{nevo_gamma_2011}}]\label{lem:susp_pentagon}
Let $\Delta$ be a $(d-1)$-dimensional flag homology sphere.
If $\gamma_1^\Delta=1$, then $\gamma^\Delta(t)=1+t$ and $\Delta= \Sigma^{d-2}\join C_5$.
\end{lemma}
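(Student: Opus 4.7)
The plan is to prove this by induction on the dimension $d-1$, leveraging the polar-size machinery just established in Lemma~\ref{lem:polar_values}. Since $\gamma_1=1$, Lemma~\ref{lem:gamma_values} forces $f_1=2d+1$, so $d\geq 2$ (the case $d=1$ gives only two vertices, hence $\gamma_1=0$).

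For the base case $d=2$, a flag $1$-dimensional homology sphere is a cycle $C_n$, and the flag condition forces $n\geq 4$; combined with $\gamma_1 = f_1 - 2d = n-4 = 1$ this yields $n=5$, so $\Delta=C_5$. A direct computation of the $h$-polynomial $h^{C_5}(z)=1+3z+z^2$ and its expansion in the basis $\{\gimel_0,\gimel_1\}=\{(1+z)^2,z\}$ gives $\gamma^{C_5}(t)=1+t$, and with the convention that $\Sigma^0$ is trivial (i.e.\ the empty join factor) we obtain $\Delta=C_5=\Sigma^{0}\join C_5$.

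For the inductive step, assume $d\geq 3$ and the claim holds for all smaller dimensions. Since $\ell=\gamma_1=1$, Lemma~\ref{lem:polar_values_ii} gives $\pi_\Delta\in\{1,2\}$. The value $\pi_\Delta=2=\ell+1$ is ruled out by Lemma~\ref{lem:polar_values_iv} (which would force $\ell=0$), hence $\pi_\Delta=1$. By Lemma~\ref{lem:polar_values_iii}, $\Delta=\Sigma\Delta'$ for some $(d-2)$-dimensional flag homology sphere $\Delta'$ (flagness of $\Delta'$ follows by identifying it with the link of a suspension vertex and applying Lemma~\ref{lem:link_flag}). Lemma~\ref{lem:gamma_suspension} gives $\gamma^{\Delta'}(t)=\gamma^{\Delta}(t)$, so in particular $\gamma_1^{\Delta'}=1$. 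The inductive hypothesis then yields $\gamma^{\Delta'}(t)=1+t$ and $\Delta'=\Sigma^{d-3}\join C_5$; associativity of the join (together with $\Sigma X = S^0 \join X$) produces
\[
\Delta=\Sigma\Delta'=S^0\join(\Sigma^{d-3}\join C_5)=\Sigma^{d-2}\join C_5,
\]
completing the induction.

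No substantive obstacle is expected: the only delicate point is verifying in the inductive step that the suspended piece $\Delta'$ is itself a flag homology sphere with $\gamma_1^{\Delta'}=1$, but both facts are immediate from Lemmas~\ref{lem:link_flag} and~\ref{lem:gamma_suspension}. The essential content of the argument is that Lemma~\ref{lem:polar_values_iv} prevents $\Delta$ from being ``too round'' (forcing a missing edge at each vertex), so Alexander duality compels a suspension decomposition that strips one dimension at a time until we land on the unique minimal example $C_5$.
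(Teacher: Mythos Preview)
Your proof is correct and follows essentially the same approach as the paper's own proof: induction on $d$, with the base case $d=2$ identifying $\Delta$ as $C_5$, and the inductive step using Lemma~\ref{lem:polar_values_ii} and~\ref{lem:polar_values_iv} to force $\pi_\Delta=1$, then Lemma~\ref{lem:polar_values_iii} and~\ref{lem:gamma_suspension} to peel off a suspension. The only differences are cosmetic---you spell out the flagness of $\Delta'$ via Lemma~\ref{lem:link_flag} and the associativity of the join, which the paper leaves implicit.
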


\begin{proof}
Assume that $\gamma_1^\Delta=1$.
The statement is relevant only when $d>1$.
If $d=2$, then $1=\gamma_1^\Delta=f_1^\Delta-2d$, and $\Delta\cong C_5$ and $\gamma_\Delta(t)=1+t$.
For $d>2$, by Lemma~\ref{lem:polar_values_ii}, $\pi_\Delta\in\{1,2\}$.
By Lemma~\ref{lem:polar_values_iv}, $\pi_\Delta=2$ if and only if $\Delta$ is an octahedral sphere which is impossible.
Hence $\pi_\Delta=1$ and by Lemma~\ref{lem:gamma_suspension} and~\ref{lem:polar_values_iii},~$\Delta$ is the suspension of a lower dimensional flag homology sphere with the same $\gamma$-vector and this finishes the proof by induction.
\end{proof}

\begin{lemma}\label{lem:iota_equals_2}
Let $\Delta$ be a $(d-1)$-dimensional flag homology sphere on $2d+\ell$ vertices and $v\in \verti(\Delta)$ such that $\iota(v)=2$. Then,
\begin{enumerate}[label=\roman{enumi}),ref=\ref{lem:iota_equals_2}~\roman{enumi})]
 \item The two antipodes $x$ and $y$ of $v$ form an edge not contained in an induced $4$-cycle, and \label{lem:iota_equals_2_edge}
 \item $\gamma^\Delta(t)=\gamma^L(t)+t\gamma^J(t)$, \label{lem:iota_equals_2_eqn}
\end{enumerate}
where $L:=\lk_\Delta(v)$ and $J:=\lk_\Delta(\{x,y\})$.
\end{lemma}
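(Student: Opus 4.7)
The plan is to first establish that $\{x,y\}$ is an edge of $\Delta$, then that this edge is not contained in any induced $4$-cycle (which completes part~(i)), and finally to identify the edge contraction $\Delta^{\downarrow e}$ (with $e=\{x,y\}$) as the suspension $\Sigma L$, so that Lemmas~\ref{lem:gamma_suspension} and~\ref{lem:gamma_vertsplit} chain together to yield part~(ii).

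For the edge claim, $L:=\lk_\Delta(v)$ is an induced codimension-$1$ flag homology sphere by Lemma~\ref{lem:link_flag}, hence an equator of $\Delta$. Its complement $\del_\Delta(L)$ has vertex set exactly $\{v,x,y\}$ with $\{v,x\}$ and $\{v,y\}$ missing, and by Lemma~\ref{lem:alex_dual} it has the homology of $S^0$, i.e., two connected components. The only partition of $\{v,x,y\}$ into two components compatible with the two missing edges requires $\{x,y\}\in\Delta$.

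The "no induced $4$-cycle" assertion and part~(ii) both rest on the following unifying observation: if $F$ is a face of $L$ whose vertex set lies neither entirely in $\verti(\lk_\Delta(x))$ nor entirely in $\verti(\lk_\Delta(y))$, then $v\in \verti(\lk_\Delta(F))$ while $x,y\notin \verti(\lk_\Delta(F))$; hence $v$ has no antipode inside the flag homology sphere $\lk_\Delta(F)$, contradicting Lemma~\ref{lem:polar_values_ii} provided $\lk_\Delta(F)$ is non-empty (the low-dimensional case $d=2$ forces $\Delta=C_5$ and is checked directly). An induced $4$-cycle containing the edge $\{x,y\}$, with additional vertices $u$ and $w$, supplies such a face $F=\{u,w\}\in L$: indeed $u\sim y$ and $w\sim x$ force $u,w\neq v$, which together with $\iota(v)=2$ gives $u,w\in \verti(L)$, and $u\sim w$ completes the clique $\{u,w,v\}$. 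The same argument applied to singletons $F=\{z\}$ rules out any vertex $z$ of $L$ non-adjacent to both $x$ and~$y$.

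With both forbidden configurations excluded, $\verti(L)=A\sqcup B\sqcup \verti(J)$, where $A$ (resp.~$B$) denotes the vertices of $L$ adjacent to $x$ but not $y$ (resp.~to $y$ but not $x$). The absence of edges between $A$ and $B$ means every face of $L$ lies in $A\cup \verti(J)$ or in $B\cup \verti(J)$, and a direct check on the faces of $\Delta^{\downarrow e}$ then identifies it with $\{v,x^*\}\join L=\Sigma L$, where $x^*$ is the image of $e$ under contraction. Part~(i) makes Lemma~\ref{lem:gamma_vertsplit} applicable to $e$ via Lemma~\ref{lem:contract_flag}, and combining it with $\gamma^{\Sigma L}=\gamma^L$ from Lemma~\ref{lem:gamma_suspension} delivers $\gamma^\Delta(t)=\gamma^L(t)+t\gamma^J(t)$. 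The main obstacle, in my view, is recognizing the unifying $\lk_\Delta(F)$ trick that handles both forbidden configurations simultaneously; once that is in place, the rest is a routine combinatorial verification of the structure of $\Delta^{\downarrow e}$.
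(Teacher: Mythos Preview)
Your proof is correct, and for part~(ii) it follows the paper's route exactly: apply Lemma~\ref{lem:gamma_vertsplit} to $e=\{x,y\}$ (legitimized by part~(i) and Lemma~\ref{lem:contract_flag}), then identify $\Delta^{\downarrow e}$ with $\Sigma L$ and invoke Lemma~\ref{lem:gamma_suspension}. You actually supply more detail than the paper does on why $\Delta^{\downarrow e}\cong\Sigma L$, via the partition $\verti(L)=A\sqcup B\sqcup\verti(J)$ and the absence of $A$--$B$ edges; the paper simply writes ``Observe that $\Delta^{\downarrow e}\cong\Sigma L$''.

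For part~(i), however, your argument is genuinely different. The paper argues topologically: $J$ is an equator of $L$, so by Jordan--Alexander $L$ decomposes into exactly two hemispheres with boundary $J$; these must coincide with $\del_{\lk_\Delta(y)}(x)$ and $\del_{\lk_\Delta(x)}(y)$, placing the two extra vertices $s,t$ of a putative induced $4$-cycle on opposite sides of $J$ inside $L$; the edge $st\in L$ then forces one of $s,t$ into $J$, destroying the induced $4$-cycle. Your route is instead combinatorial: for any face $F\in L$ not entirely inside $\lk_\Delta(x)$ nor $\lk_\Delta(y)$, the vertex $v$ lies in the flag sphere $\lk_\Delta(F)$ with no antipode there, contradicting Lemma~\ref{lem:polar_values_ii}. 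This single observation kills both the induced $4$-cycle (take $F=\{u,w\}$) and any vertex of $L$ missing both $x$ and $y$ (take $F=\{z\}$), the latter being exactly what you need for the $\Sigma L$ identification. Your approach is more self-contained (it avoids the uniqueness-of-hemispheres step) and unifies the two forbidden configurations; the paper's approach makes the geometric picture of $J$ separating $L$ more explicit, which is conceptually helpful elsewhere in the paper.
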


\begin{proof}
i) By Lemma~\ref{lem:alex_dual}, the antipodes $x$ and $y$ have to be connected by an edge.
Assume that the edge $\{x,y\}$ is contained in a $4$-cycle $(x,y,s,t)$, where $s\in\lk_\Delta(y)$  and $t\in\lk_\Delta(x)$.
The link $J$ of the edge $\{x,y\}$ in $\Delta$ is an equator of~$L$.
By the Jordan--Alexander theorem, the homology sphere $L$ decomposes into two hemispheres intersecting in $J$, $L=L_J^+\cup_J L_J^-$.
Further, $L_J^+$ and $L_J^-$ are the only hemispheres contained in $L$ with boundary $J$.
On the other hand, the hemispheres $\del_{\lk_{\Delta}(y)}(x)$ and $\del_{\lk_{\Delta}(x)}(y)$ are different homology balls, both contained in $L$ with boundary $J$, so w.l.o.g. the former is $L_J^+$ and the latter is $L_J^-$; thus $s\in L_J^+$ and $t\in L_J^-$.

Because $v$ is not connected to $x$ nor to $y$, $s$ and $t$ have to be vertices in~$L$ and since links are induced complexes by Lemma~\ref{lem:link_flag}, the edge between $s$ and $t$ belongs to $L$. As $J$ separates $L_J^+$ from $L_J^-$, one of $s,t$ must belong to $J$.
Thus, the $4$-cycle $(x,y,s,t)$ is not induced.

ii) By Lemma~\ref{lem:contract_flag}, $\Delta^{\downarrow e}$ is a flag homology sphere and by Lemma~\ref{lem:gamma_vertsplit}, where $e = \{x,y\}$, we get
\[
 \gamma^\Delta(t)=\gamma^{\Delta^{\downarrow e}}(t)+t\gamma^J(t)
\]
Observe that $\Delta^{\downarrow e}\cong \Sigma L$, so by Lemma~\ref{lem:gamma_suspension} the assertion follows.
\end{proof}

\begin{theorem}\label{thm:polar_values}
Let $\Delta$ be a $(d-1)$-dimensional flag homology sphere with polar size~$\pi_\Delta >1$, and $v_0\in \verti(\Delta)$ with $\iota(v_0)=\pi_\Delta$.
\begin{enumerate}[label=\roman{enumi}),ref=\ref{thm:polar_values}~\roman{enumi}),itemsep=-0ex]
\item If $\lk_{\Delta}(v_0)\cong\Sigma\Gamma$, where $\Gamma$ is a $(d-3)$-dimensional flag homology sphere, then $\Delta\cong \Gamma \join C_{\pi_\Delta+3}$. Further,
    $\pi_\Delta-1$ divides the highest degree coefficient of $\gamma^\Delta(t)$. \label{thm:extract_cycle}

\item If $d\geq 4$ and $\pi_\Delta=\gamma_1^\Delta$, then $\Delta= C_5\join C_{\pi_\Delta+3}$ (and $d=4$).
    \label{thm:iota_equals_ell}
\end{enumerate}
\end{theorem}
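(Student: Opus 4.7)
The plan is to prove part (i) by induction on $\pi := \pi_\Delta \geq 2$ combined with an edge contraction. Set $\lk_\Delta(v_0) = \Sigma\Gamma = \{a,b\} \ast \Gamma$ and let $w_1, \ldots, w_\pi$ be the antipodes of $v_0$. Using $\iota(a) \geq \pi$ and the fact that $b$ is the only antipode of $a$ inside $\lk_\Delta(v_0)$, at most one $w_i$ is adjacent to $a$; if none were, $\lk_\Delta(a)$ would reduce to the cone $v_0 \ast \Gamma$, a ball rather than a sphere. Hence exactly one, call it $w_1$, is adjacent to $a$. A standard pseudomanifold dual-graph-connectedness argument---an induced pure pseudomanifold subcomplex of the same dimension as a flag homology sphere must equal the ambient sphere---applied to $\lk_{\lk_\Delta(a)}(w_1)$ inside $\Gamma$ forces $w_1$ to be adjacent to every vertex of $\Gamma$, giving $\lk_\Delta(a) = \{v_0, w_1\} \ast \Gamma$. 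Symmetrically, $\lk_\Delta(b) = \{v_0, w'\} \ast \Gamma$ for a unique $w' \in \{w_1, \ldots, w_\pi\}$.

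The main obstacle is to verify $w_1 \neq w'$, which is precisely what prevents $\{v_0, a\}$ from lying in the induced $4$-cycle $(v_0, a, w_1, b)$ and enables its contraction. Suppose they coincide. Then $\Sigma\Gamma$ sits as an induced $(d-2)$-sphere inside the flag $(d-2)$-sphere $\lk_\Delta(w_1)$, and the same pseudomanifold argument forces $\lk_\Delta(w_1) = \Sigma\Gamma = \lk_\Delta(v_0)$. Hence $v_0$ and $w_1$ are twin vertices, so $\Sigma^2\Gamma$ embeds as an induced $(d-1)$-sphere in $\Delta$; a third application of the pseudomanifold argument gives $\Delta = \Sigma^2\Gamma$, whose polar size is $1$, contradicting $\pi \geq 2$.

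With this in place, contracting $\{v_0, a\}$ (legitimate by Lemma~\ref{lem:contract_flag}) produces a flag homology sphere $\Delta'$ in which the contracted vertex $z$ satisfies $\lk_{\Delta'}(z) = \{b, w_1\} \ast \Gamma = \Sigma\Gamma$ and $\iota_{\Delta'}(z) = \pi - 1$. Tracking antipode numbers of every other vertex (based on adjacency to $v_0$ versus $a$) yields $\pi_{\Delta'} = \pi - 1$. For $\pi \geq 3$ the induction hypothesis gives $\Delta' = \Gamma \ast C_{\pi + 2}$; for the base case $\pi = 2$, we have $\pi_{\Delta'} = 1$, and Lemma~\ref{lem:polar_values_iii} together with $\lk_{\Delta'}(z) = \Sigma\Gamma$ yields $\Delta' = \Sigma^2\Gamma = \Gamma \ast C_4$. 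Inverting the contraction via the vertex split at $z$ with equator $\Gamma$ restores the edge $\{v_0, a\}$ and produces $\Delta = \Gamma \ast C_{\pi + 3}$. The divisibility claim follows from $\gamma^\Delta(t) = \gamma^\Gamma(t)(1 + (\pi - 1)t)$, whose leading coefficient has $\pi - 1$ as a factor.

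For part (ii), Lemma~\ref{lem:polar_values_i} gives $\gamma_1^{\lk_\Delta(v_0)} = 1$, so Lemma~\ref{lem:susp_pentagon} identifies $\lk_\Delta(v_0) = \Sigma^{d-3} C_5$. Since $d \geq 4$, this writes as $\Sigma\Gamma$ with $\Gamma = \Sigma^{d-4} C_5$, and part (i) yields $\Delta = \Sigma^{d-4} C_5 \ast C_{\ell + 3}$. If $d \geq 5$, any suspension vertex of $\Sigma^{d-4} C_5$ retains antipode number $1$ inside the whole join, contradicting $\pi_\Delta = \ell \geq 2$. Hence $d = 4$ and $\Delta = C_5 \ast C_{\ell + 3}$.
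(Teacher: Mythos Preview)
Your argument is correct, and your treatment of part~(ii) coincides with the paper's. For part~(i), however, you take a genuinely different route. The paper proceeds by direct iteration: after obtaining $\lk_\Delta(a)=\{v_0,w_1\}\ast\Gamma$ (via the same counting of antipodes and the same ``an induced homology sphere of full dimension inside a homology sphere must be everything'' principle that you use), it simply repeats the argument with $a$ in place of $v_0$ to get $\lk_\Delta(w_1)=\{a,w_2\}\ast\Gamma$, then with $w_1$ in place of $a$, and so on around the cycle until the path $v_0,a,w_1,w_2,\dots$ closes up at $b$. At that point $\Gamma\ast C_{\pi_\Delta+3}$ sits inside $\Delta$ as a full-dimensional homology sphere, forcing equality. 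No edge contraction appears.

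Your approach instead contracts $\{v_0,a\}$ after the nontrivial verification $w_1\neq w'$, then inducts on $\pi_\Delta$. The cost is extra bookkeeping: you must rule out the induced $4$-cycle, track antipode numbers through the contraction to pin down $\pi_{\Delta'}=\pi_\Delta-1$, and rely on the fact that vertex splitting along the equator $\lk_\Delta(e)$ inverts the contraction of $e$ (the paper only states the easier direction $(\Delta^{\uparrow e})^{\downarrow v^+v^-}=\Delta$, though your claim is standard under the no-induced-$4$-cycle hypothesis). The benefit is a clean inductive structure and a portable technique---reducing $\pi_\Delta$ by one via contraction---that could be reused for other statements. Both proofs ultimately hinge on the same local observation that the suspension structure $\Sigma\Gamma$ propagates from $v_0$ to its cycle-neighbor $a$; the paper then propagates it all the way around, while you propagate it one step and let induction do the rest.
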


\begin{proof}
 i) Let $\lk_{\Delta}(v_0)$ be the suspension of $\Gamma$ over the two vertices $u,w$, and denote by $\{w_1,\dots,w_{\pi_\Delta}\}$ the antipodes of $v_0$.
 As $\{u,w\}$ is not an edge of $\Delta$ and $u$ is connected to $v_0$ and to all vertices of $\Gamma$, $u$ requires at least $\pi_\Delta-1$ antipodes in $\{w_1,\dots,w_{\pi_\Delta}\}$. However, $u$ is contained in a facet containing an antipode of $v_0$, so w.l.o.g. $uw_1$ is an edge of $\Delta$. Further, the complex $\Gamma=\lk_{\Delta}(uv_0)$ is an \emph{induced} subcomplex of $\lk_{\Delta}(u)$, thus,
 by Lemma~\ref{lem:alex_dual}, $\lk_{\Delta}(u)$ is the suspension of $\Gamma$ by $v_0$ and $w_1$.
 The same argument can be applied sequentially with $w_1$ to get $\lk_{\Delta}(w_1)=\{u,w_2\}\join \Gamma$ and so on, to finally get
 that $\Delta$ contains the $(d-1)$-homology sphere $\Gamma\join (v_0,u,w_1,w_2,\dots,w_i,w)$, for some $i$. Hence, by Alexander duality, $i=\pi_\Delta$ and thus $\Delta\cong\Gamma\join (v_0,u,w_1,\dots,w_{\pi_\Delta},w)\cong \Gamma\join C_{\pi_\Delta+3}$.
Then $\gamma^{\Delta}(t)=\gamma^{\Gamma}(t)\gamma^{C_{\pi_\Delta+3}}(t)=
\gamma^{\Gamma}(t)(1+(\pi_\Delta-1)t)$, so $\pi_\Delta-1$ divides the top coefficient of $\gamma^{\Delta}(t)$.

 ii)
 Let $L:=\lk_{\Delta}(v_0)$.
 By Lemma~\ref{lem:polar_values_i}, $\gamma_1^L=\gamma_1^\Delta-\pi_\Delta+1=1$, consequently by Lemma~\ref{lem:susp_pentagon} $L= \Sigma^k\join C_5$, for some $k\geq0$.
 Since $d\geq4$, $k$ is at least $1$, i.e. $L$ is a suspension.
 By part~i), $\Delta= \Sigma^{k-1}\join C_5 \join C_{\gamma_1^\Delta+3}$, and as $\pi_{\Delta}>1$ we conclude $k=1$.
\end{proof}

The following corollary extends Lemma~\ref{lem:octahedral_links}. 

\begin{corollary}
 Let $d\geq 3$, $\Delta$ be a $(d-1)$-dimensional flag homology sphere, and $\Gamma$ a simplicial complex such that $\lk_\Delta(v)\cong\Sigma\Gamma$ for all $v\in V(\Delta)$.
 If $\Gamma$ is not an octahedral sphere, then there exist integers $\ell\geq2$ and $k\geq 5$ such that $d=2\ell$, $\Delta\cong\join^\ell C_k$, and $\Gamma\cong\join^{\ell-1}C_k$.
\end{corollary}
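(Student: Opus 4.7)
The plan is to induct on $d$. First I would observe, by Lemma~\ref{lem:gamma_suspension} and Lemma~\ref{lem:polar_values_i}, that all vertex links having the same $\gamma_1$ forces $\iota(v)$ to be the same for every $v\in V(\Delta)$, so that $\iota(v)=\pi_\Delta$ for all such $v$. The base case $d=3$ is vacuous: $\Sigma\Gamma$ must be a flag $1$-sphere, i.e., a cycle, and the only cycle which is a suspension is $C_4$, forcing $\Gamma\cong S^0$ to be octahedral. For the inductive step, assume $d\geq4$ and the statement in all smaller dimensions.

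I would first rule out $\pi_\Delta=1$. If it held, by Lemma~\ref{lem:polar_values_iii} we would have $\Delta=\Sigma\Delta'$; the link of a suspension vertex gives $\Delta'\cong\Sigma\Gamma$, while the link of any $v\in V(\Delta')$ gives $\Sigma\lk_{\Delta'}(v)\cong\Sigma\Gamma$, hence $\lk_{\Delta'}(v)\cong\Gamma$. Combining with $\Delta'=\Sigma\Gamma$ and taking $v\in V(\Gamma)\subset V(\Delta')$ yields $\Sigma\lk_\Gamma(v)\cong\Gamma$ for every such $v$, so every vertex of $\Gamma$ is a suspension vertex of $\Gamma$. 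A short induction on $\dim\Gamma$ would then show that $\Gamma$ is octahedral: writing $\Gamma=\Sigma\Gamma_1$, one checks that every $v\in V(\Gamma_1)$ still forms a suspension pair in $\Gamma_1$ (its unique non-neighbor in $\Gamma$ lies in $V(\Gamma_1)$), so the property transfers to $\Gamma_1$; iterate until reaching the empty complex. This contradicts the hypothesis on $\Gamma$.

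Hence $\pi_\Delta\geq2$. Applying Theorem~\ref{thm:polar_values}(i) to any $v_0$, since $\lk_\Delta(v_0)\cong\Sigma\Gamma$, I get $\Delta\cong\Gamma\join C_k$ with $k:=\pi_\Delta+3\geq5$. Computing the link of a $v\in V(\Gamma)$ inside $\Delta$ gives $\lk_\Gamma(v)\join C_k\cong\Sigma\Gamma$. Now a flag simplicial complex is determined by its $1$-skeleton, and the complement of a graph join is the disjoint union of the complements; hence the factorization of a flag complex into join-indecomposable flag factors is unique, corresponding to the connected components of the complement graph. Since $\overline{C_k}$ is connected for $k\geq5$, $C_k$ is join-indecomposable, and since $S^0\not\cong C_k$, the identity $\Sigma\Gamma=S^0\join\Gamma=C_k\join\lk_\Gamma(v)$ forces $C_k$ to be a factor of $\Gamma$: write $\Gamma=C_k\join\Gamma^*$. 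Then $\Sigma\Gamma=C_k\join\Sigma\Gamma^*$, and cancellation produces $\lk_\Gamma(v)\cong\Sigma\Gamma^*$ for every $v\in V(\Gamma)$.

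To close the induction, if $V(\Gamma^*)=\emptyset$ then $\Gamma=C_k$, $d=4$, $\Delta\cong C_k\join C_k$, so we finish with $\ell=2$. Otherwise I would verify that $\Gamma^*$ is not octahedral: if $\Gamma^*\cong\join^qS^0$ with $q\geq1$, then for $v\in V(\Gamma^*)\subset V(\Gamma)$ the identity $\lk_\Gamma(v)=C_k\join\join^{q-1}S^0=\Sigma\Gamma^*=\join^{q+1}S^0$ would violate unique factorization. The inductive hypothesis thus applies to the $(d-3)$-dimensional flag sphere $\Gamma$ with non-octahedral suspension complex $\Gamma^*$, yielding $\ell'\geq2$ and $k'\geq5$ with $d-2=2\ell'$, $\Gamma\cong\join^{\ell'}C_{k'}$, and $\Gamma^*\cong\join^{\ell'-1}C_{k'}$. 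Matching $\Gamma=C_k\join\Gamma^*$ against $\Gamma\cong\join^{\ell'}C_{k'}$ via unique factorization gives $k=k'$, and setting $\ell:=\ell'+1$ delivers $\Delta\cong\join^{\ell}C_k$, $d=2\ell$, $\Gamma\cong\join^{\ell-1}C_k$. The main subtlety is the unique join factorization of flag complexes, which I would record as a separate lemma; it reduces to the well-known fact that the complement of a graph join is the disjoint union of complements.
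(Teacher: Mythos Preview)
Your proof is correct and follows the same route as the paper: rule out suspension vertices, apply Theorem~\ref{thm:extract_cycle} to write $\Delta\cong\Gamma\join C_k$, and then iterate to peel off further $C_k$ factors. The paper compresses the iteration to the two lines ``as all vertex links in $\Delta$ are isomorphic, $\Gamma$ must factor as $\Gamma\cong C_k\join\Lambda$'' and ``by repeating this argument, $\Delta\cong\join^\ell C_k$'', so your explicit appeal to unique join factorization of flag complexes (via connected components of the complement graph) and your formal induction on $d$ supply exactly the details the paper leaves implicit.
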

\begin{proof}
The complex $\Delta$ has no suspension vertex; else every vertex would be a suspension vertex, forcing $\Delta$ be octahedral, and thus $\Gamma$ be octahedral as well, giving a contradiction. Thus, $\pi_{\Delta}>1$.

By Theorem~\ref{thm:extract_cycle}, $\Delta\cong\Gamma\join C_{\pi_\Delta+3}$. Let $k=\pi_\Delta+3\geq 5$. As all vertex links in $\Delta$ are isomorphic,~$\Gamma$ must factors as $\Gamma\cong C_k\join\Lambda$ for some $\Lambda$.
By repeating this argument, $\Delta$ factors as $\Delta\cong \join^\ell C_k$ for some $\ell\geq2$. Thus $d=2\ell$ and $\Gamma\cong\join^{\ell-1}C_k$.
\end{proof}

\subsection{Upper bound on $\gamma_2$}

The following theorem generalizes Lemma~\ref{lem:susp_pentagon} of Nevo and Petersen, which treated the case $\ell=1$.

\begin{theorem}\label{thm:gamma2_bounded}
Let $\ell\geq 0$.
If $\Delta$ is a $(d-1)$-dimensional flag homology sphere on $2d+\ell$ vertices, then $\gamma_2\leq \binom{\ell}{2}$.
Furthermore, the equality $\gamma_2= \binom{\ell}{2}$ holds if and only if
\[
	\Delta = \Sigma^{m}\ast^{\ell}C_5,
\]
for some $m\geq 0$.
\end{theorem}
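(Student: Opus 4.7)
The plan is to prove the bound and the equality characterization simultaneously by induction on the dimension $d$. By Lemma~\ref{lem:gamma_values}, writing $\alpha$ for the number of missing edges of $\Delta$, the target bound $\gamma_2 \leq \binom{\ell}{2}$ is equivalent to $\alpha \geq 3\ell + d$. The base cases $\ell \in \{0, 1\}$ are covered by Lemma~\ref{lem:gamma_zero} and Lemma~\ref{lem:susp_pentagon}, and $\gamma_2 = 0$ holds automatically when $d \leq 3$, so we may assume $\ell \geq 2$ and $d \geq 4$.

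Split on the polar size $\pi_\Delta$. When $\pi_\Delta = 1$, Lemma~\ref{lem:polar_values_iii} gives $\Delta = \Sigma \Delta'$; by Lemma~\ref{lem:gamma_suspension}, $\gamma_2^\Delta = \gamma_2^{\Delta'}$, and the inductive hypothesis applied to $\Delta'$ yields both the bound and the equality form $\Delta = \Sigma^m \ast^\ell C_5$. When $\pi_\Delta \geq 2$, pick $v_0 \in V(\Delta)$ with $\iota(v_0) = \pi_\Delta$. In the main sub-case $\pi_\Delta = 2$, Lemma~\ref{lem:iota_equals_2} produces the edge $e = \{x, y\}$ joining the two antipodes of $v_0$, not in any induced $4$-cycle, and the key identity
\[
    \gamma_2^\Delta \;=\; \gamma_2^L + \gamma_1^J,
\]
where $L = \lk_\Delta(v_0)$ is a $(d-2)$-dimensional flag homology sphere with $\gamma_1^L = \ell - 1$ (Lemma~\ref{lem:polar_values_i}) and $J = \lk_\Delta(\{x, y\})$. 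Induction gives $\gamma_2^L \leq \binom{\ell-1}{2}$, so it suffices to prove $\gamma_1^J \leq \ell - 1$. The sub-case $\pi_\Delta \geq 3$ admits an analogous treatment via Lemma~\ref{lem:gamma_vertsplit} applied to any edge not in an induced $4$-cycle, where every vertex now has at least three antipodes.

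To bound $\gamma_1^J$, partition $V(\Delta) \setminus \{x, y\}$ by adjacency to $x$ and $y$ into four parts of sizes $a = |V(J)|$ (common neighbors), $b$ (only of $x$), $c$ (only of $y$), and $r$ (neither). A direct count gives $\gamma_1^J = \ell + 2 - (b + c + r)$, so the goal reduces to $b + c + r \geq 3$. The vertex $v_0$ lies in the $r$-part, so $r \geq 1$, and the inequalities $\iota(x) = c + r \geq 2$ and $\iota(y) = b + r \geq 2$ cover every configuration except the degenerate one $b = c = 0$, $r = 2$. In that case $x$ and $y$ share their neighborhood outside $\{x, y\}$, and the unique vertex $v_0' \in r \setminus \{v_0\}$ must be adjacent to $v_0$ (otherwise $\iota(v_0) \geq 3$) and has no antipode in $L$ (its antipodes $x, y$ in $\Delta$ are not in $L$); thus $v_0'$ would be a cone apex of $L$, impossible for a homology sphere. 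Combining yields $\gamma_2^\Delta \leq \binom{\ell-1}{2} + (\ell - 1) = \binom{\ell}{2}$. For equality, both inequalities must saturate: induction forces $L \cong \Sigma^{m'} \ast^{\ell-1} C_5$, and the saturation $b + c + r = 3$ gives a rigid local picture around $\{x, y\}$ that, via Theorem~\ref{thm:extract_cycle}(i), forces an extra $C_5$-factor to split off, producing $\Delta = \Sigma^{m'} \ast^\ell C_5$. I expect the main obstacle to be this rigidity step---extracting a global join decomposition from the local equality data at $v_0$---and, to a lesser extent, the degenerate subcase, where one must choose $v_0$ carefully to make the cone-obstruction go through when $v_0'$ has $\iota > 2$ in $\Delta$.
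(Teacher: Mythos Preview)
Your overall strategy---induction, splitting on the polar size $\pi_\Delta$, and using the decomposition $\gamma_2^\Delta = \gamma_2^L + \gamma_1^J$ when $\pi_\Delta = 2$---matches the paper's. There are, however, two genuine gaps.

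\textbf{The degenerate subcase.} Your cone-apex argument for $b=c=0$, $r=2$ only works when $\iota(v_0') = 2$: if $v_0'$ has an additional antipode $w$, then necessarily $w \in J \subseteq L$, so $v_0'$ is \emph{not} a cone point of $L$. You acknowledge this, but ``choosing $v_0$ carefully'' cannot repair it, since $v_0$ already has the minimal antipode number. The clean fix, which is what the paper does, is to observe that $J = \lk_\Delta(\{x,y\})$ is an \emph{equator} of $L$ (every vertex of $J$ is adjacent to $v_0$ because $\iota(v_0)=2$), so by Lemma~\ref{lem:alex_dual} the complement $\del_L(J)$ has at least two vertices. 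This gives $b+c+r\ge 3$ directly with no degenerate case, and in the equality analysis it is precisely what forces $L = \Sigma J$, the hypothesis needed to invoke Theorem~\ref{thm:extract_cycle}.

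\textbf{The case $\pi_\Delta\ge 3$.} This is the substantive gap. Your proposed ``analogous treatment via Lemma~\ref{lem:gamma_vertsplit}'' does not work: contracting an edge preserves the dimension, so your induction on $d$ does not apply to $\Delta^{\downarrow e}$; and in any case there is no guarantee, when $\pi_\Delta\ge 3$, that an edge avoiding all induced $4$-cycles exists (Lemma~\ref{lem:iota_equals_2} is specific to $\iota=2$). The paper handles this case by a separate and non-trivial argument (Lemma~\ref{lem:polar_size_3_on_gamma2}): one expresses $\gamma_2$ in terms of $f_2$ via Lemma~\ref{lem:gamma_values}, bounds $f_2^L$ using the inductive bound on $\gamma_2^L$, and then overcounts $f_2^\Delta$ by partitioning the edges into $\edge(L)$, edges from $v_0$, edges inside the antipode set $I$, and edges between $L$ and $I$. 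The naive estimate overshoots the target by $2(\pi_\Delta-1)+1$, and the crux is to exhibit that many missing edges between $L$ and $I$, using two disjoint facets of $L$ avoiding a fixed vertex (Lemma~\ref{lem:u2facets}). This edge-count is not analogous to the $\pi_\Delta=2$ treatment and constitutes real additional work.
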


We present two lemmas, which may be of independent interest, to be used in the proof. Assertion~\ref{lem:u2facets_i} of Lemma~\ref{lem:u2facets} is known \cite{meshulam_domination_2003,gal_real_2005}.

\begin{lemma}\label{lem:u2facets}
Let $S$ be a flag homology sphere.
\begin{enumerate}[label=\roman{enumi})]
 \item If $T_1\in S$ is a facet, then exists another facet $T_2\in S$ disjoint from $T_1$.\label{lem:u2facets_i}
 \item If $S$ is not a suspension, and $s\in S$ is a vertex, then there exist two disjoint facets $T_1,T_2\in S$ not containing $s$.\label{lem:u2facets_ii}
\end{enumerate}
\end{lemma}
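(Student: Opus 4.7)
For part (i), the plan is simply to quote the known result: every facet of a flag homology sphere admits a disjoint facet, proved in Meshulam \cite{meshulam_domination_2003} and Gal \cite{gal_real_2005} via domination/homological arguments.

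For part (ii), the plan is to apply (i) \emph{to the link} rather than to $S$ itself. By Lemma~\ref{lem:polar_values_iii}, the non-suspension hypothesis is equivalent to $\pi_S \geq 2$, so $s$ has at least two antipodes in $S$. By Lemma~\ref{lem:link_flag}, $L := \lk_S(s)$ is a flag $(d-2)$-sphere, and assertion (i) applied to $L$ produces disjoint facets $F_1, F_2$ of $L$. Each $F_i$ is a codimension-$1$ face of $S$, so by the pseudomanifold property it lies in exactly two facets of $S$: the ``upper'' one $F_i \cup \{s\}$, and a ``lower'' one $T_i := F_i \cup \{u_i\}$. Here $u_i$ must be an antipode of $s$, because if $u_i$ were a neighbor of $s$ then $F_i \cup \{u_i\}$ would already be a face of $L$, contradicting the maximality of $F_i$ in $L$. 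In particular $s \notin T_1 \cup T_2$. Since $u_j$ is not a neighbor of $s$ we have $u_j \notin V(L) \supseteq F_i$, so combined with $F_1 \cap F_2 = \emptyset$ we obtain $T_1 \cap T_2 = \{u_1\} \cap \{u_2\}$. Hence $(T_1, T_2)$ is the desired pair precisely when $u_1 \neq u_2$.

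The main obstacle is therefore the degenerate case $u_1 = u_2 =: u^{*}$. The plan to handle it is to exploit jointly (a) the freedom in the choice of the disjoint pair $(F_1, F_2)$ inside $L$, which may be taken among many disjoint pairs guaranteed by repeated application of (i) in $L$, and (b) the existence of a second antipode $u^{**} \neq u^{*}$ of $s$ supplied by $\pi_S \geq 2$. Concretely, if every disjoint pair of facets of $L$ were to extend through the same antipode $u^{*}$, then every facet of $L$ would lie in $\lk_S(u^{*})$, forcing $u^{*}$ to be adjacent to every vertex of $V(L)$; by flagness this upgrades to the inclusion $L \subseteq \lk_S(u^{*})$ of induced subcomplexes. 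A careful analysis of where the second antipode $u^{**}$ of $s$ sits inside $\lk_S(u^{*})$ — using the pseudomanifold/Alexander-duality statement of Lemma~\ref{lem:alex_dual} applied to the equator $\lk_S(u^{*})$ of $S$ — should then force $\iota(u^{*}) = 1$ and hence make $S$ a suspension over $\lk_S(u^{*})$, contradicting the hypothesis. I anticipate this last step, namely pinning down the adjacency pattern between $u^{*}$, $u^{**}$ and $V(L)$ precisely enough to conclude $\iota(u^{*}) = 1$, to be the main technical difficulty of the proof.
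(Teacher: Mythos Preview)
For part~(i) your plan is fine; the paper also notes the result is known and supplies a short self-contained inductive proof for completeness.

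For part~(ii) your approach is genuinely different from the paper's and, as written, does not close. Grant for the moment your strong hypothesis that in the degenerate case every facet of $L=\lk_S(s)$ extends downward through one and the same antipode $u^*$. Then indeed $L$ sits inside $\lk_S(u^*)$ as an induced full-dimensional homology subsphere, and a pseudomanifold/strong-connectivity argument forces $L=\lk_S(u^*)$. But now the antipodes of $u^*$ are exactly $V(S)\setminus(V(L)\cup\{u^*\})=\{s\}\cup\bigl(\{\text{antipodes of }s\}\setminus\{u^*\}\bigr)$, so $\iota(u^*)=\iota(s)\geq 2$, \emph{not} $1$; your anticipated contradiction never materializes. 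There is also an earlier gap: the negation of ``some disjoint pair has $u(F_1)\neq u(F_2)$'' is only ``every disjoint pair has $u(F_1)=u(F_2)$'', which does not by itself produce a single common $u^*$ --- e.g.\ if $L$ is the octahedral $2$-sphere, each facet has a unique disjoint partner, and antipodal pairs of facets could in principle be assigned values independently.

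The paper sidesteps all of this by passing to the link of an \emph{antipode} $s'$ of $s$ rather than to $\lk_S(s)$. Since $S$ is not a suspension, not every facet of $\lk_S(s')$ extends through $s$; pick one such facet $T'_2$, apply~(i) inside $\lk_S(s')$ to obtain a disjoint $T'_1$, and set $T_1=T'_1\cup\{s'\}$ and $T_2=T'_2\cup\{y\}$ with $y$ on the far side of the equator $\lk_S(s')$. Both avoid $s$ automatically (since $s\notin\lk_S(s')$, and $y\neq s$ by the choice of $T'_2$), and they are disjoint by construction. The switch from $\lk_S(s)$ to $\lk_S(s')$ is precisely the idea your plan is missing.
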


\begin{proof}
Proceed by induction on the dimension $t$ of $S$, where for $t=1$ the claims are clear. Let $t>1$.

\ref{lem:u2facets_i} Let $T_1$ be a facet of $S$ and $s'\in T_1$ a vertex.
By hypothesis, for $T'_1:=T_1-\{s'\}$ there exists another facet $T'_2$ of $S':=\lk_{S}(s')$ disjoint from $T'_1$.
By the Jordan--Alexander theorem there exists a facet $T_2\in S$ such that $T_2=T'_2 \sqcup \{y\}$ where $y$ is in the connected component of $S-S'$ not containing $s'$. This proves \ref{lem:u2facets_i}.

\ref{lem:u2facets_ii} Let $s\in S$ and $s'$ be an antipode of $s$.
As $S$ is not a suspension, there exists a facet $T'_2$ of $S':=\lk_{S}(s')$ such that $\{s\}\cup T'_2$ is not a face of $S$.
By \ref{lem:u2facets_i}, there is a facet $T'_1$ of $S'$ disjoint from $T'_2$.
As in part \ref{lem:u2facets_i}, there are disjoint facets $T_1=T'_1\cup \{s'\}$ and $T_2=T'_2\cup \{y\}$ where $y$ is not~$s'$, and by the choice of $T'_2$, $y$ is not $s$.
This proves \ref{lem:u2facets_ii}.
\end{proof}

\begin{lemma}\label{lem:polar_size_3_on_gamma2}
Let $\ell\geq 0$.
If $\Delta$ is a $(d-1)$-dimensional flag homology sphere on $2d+\ell$ vertices with $\pi_\Delta\geq 3$, then $\gamma_2<\binom{\ell}{2}$.
\end{lemma}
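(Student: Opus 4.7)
The plan is to translate the inequality $\gamma_2 < \binom{\ell}{2}$ into a statement about the number of missing edges $\alpha$ of $\Delta$ and then control $\alpha$ via polar-size and link-based double counts. By Lemma~\ref{lem:gamma_values} applied with $\gamma_1 = \ell$, the identity $\alpha + \gamma_2 = \tfrac{\ell(\ell+5)}{2} + d$ shows that the target is equivalent to $\alpha > d + 3\ell$. From $2\alpha = \sum_v \iota(v) \geq \pi_\Delta\cdot f_1 \geq 3(2d+\ell)$, I get $\alpha \geq 3d + \tfrac{3\ell}{2}$, which already closes the case $4d > 3\ell$. For $d \leq 3$ the $\gamma$-polynomial has degree at most $\lfloor d/2 \rfloor \leq 1$, so $\gamma_2 = 0$; meanwhile $\pi_\Delta \geq 3$ combined with Lemma~\ref{lem:polar_values}(ii) forces $\ell \geq 2$, and thus $\binom{\ell}{2} \geq 1$.

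The main obstacle is the remaining range $d \geq 4$ with $4d \leq 3\ell$, where the crude degree estimate falls short of $d + 3\ell$ by a bounded amount. Here the plan is to sharpen the bound through the double count
\[
  T := \sum_{v \in \verti(\Delta)} \alpha(\lk_\Delta(v)) = \sum_{\{a,b\} \text{ missing in } \Delta} |N(a) \cap N(b)|,
\]
where $N(w)$ denotes the neighbors of $w$ in $\Delta$. Each link $\lk_\Delta(v)$ is a flag $(d-2)$-sphere with $\gamma_1 = \ell - \iota(v) + 1$ by Lemma~\ref{lem:polar_values}(i); applying Lemma~\ref{lem:gamma_values} to the link and using $\gamma_2^{\lk_\Delta(v)} \leq \binom{\ell - \iota(v) + 1}{2}$---automatic when $d-2 \leq 3$, and otherwise the inductive instance of Theorem~\ref{thm:gamma2_bounded}---gives a lower bound on $T$. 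On the other hand, writing $\bar N(w)$ for the antipode set of $w$, the identity $|N(a)\cap N(b)| = f_1 - \iota(a) - \iota(b) + |\bar N(a) \cap \bar N(b)|$ together with $|\bar N(a) \cap \bar N(b)| \leq \min(\iota(a), \iota(b)) - 1$ yields $|N(a) \cap N(b)| \leq f_1 - 1 - \max(\iota(a), \iota(b))$, an upper bound sensitive to the full distribution of the $\iota(v)$ rather than only to $\pi_\Delta$.

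Assuming $\alpha \leq d + 3\ell$ for contradiction, the excess $\sum_v(\iota(v) - 3) = 2\alpha - 3f_1$ is forced to be small, and a case analysis on how it is distributed---tracking how many missing edges are incident to vertices with $\iota > 3$---makes the upper bound on $T$ strictly smaller than the lower bound, yielding the required contradiction. The whole argument is naturally organized as a joint induction on $d$ coupling this lemma with Theorem~\ref{thm:gamma2_bounded}, each dimensional step invoking the previous dimension of both results.
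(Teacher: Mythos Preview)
Your reformulation through the identity $\alpha+\gamma_2=\tfrac{\ell(\ell+5)}{2}+d$ is correct, and the opening moves are clean: the crude bound $2\alpha\ge \pi_\Delta f_1\ge 3(2d+\ell)$ together with the low–dimensional observation that $\gamma_2=0$ for $d\le 3$ (while $\pi_\Delta\ge 3$ forces $\ell\ge 2$) genuinely disposes of all cases with $4d>3\ell$ or $d\le 3$.  This is a different and in some ways more transparent entry point than the paper's, which instead bounds $f_2$ via an edge decomposition $\edge(L)\sqcup\edge(v_0,L)\sqcup\edge(I)\sqcup\edge(L,I)$ around a minimum–antipode vertex and then saves $2a+1$ edges using the two disjoint facets of Lemma~\ref{lem:u2facets}.

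The gap is in the remaining range $d\ge 4$, $4d\le 3\ell$.  Your double count $T=\sum_v\alpha(\lk_\Delta(v))=\sum_{\{a,b\}\text{ missing}}|N(a)\cap N(b)|$ is correct, and so are the bounds $\alpha(\lk_\Delta(v))\ge 3(\ell-\iota(v)+1)+d-1$ and $|N(a)\cap N(b)|\le f_1-1-\max(\iota(a),\iota(b))$.  But the concluding ``case analysis'' is asserted, not executed, and in fact it cannot succeed from these inequalities alone.  Take the regular situation $\iota(v)=3$ for every $v$ (so the ``excess'' $\sum_v(\iota(v)-3)$ vanishes and there are no vertices with $\iota>3$ to track).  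Then $\alpha=\tfrac{3}{2}f_1$, your lower bound gives $T\ge f_1(3\ell+d-7)$, and your upper bound gives $T\le \alpha(f_1-4)=\tfrac{3}{2}f_1(f_1-4)$; comparing, one needs $4d<3\ell-2$ to get a contradiction, which fails at the boundary $4d=3\ell$.  Equivalently, at $4d=3\ell$ the regular profile yields exactly $\gamma_2=\binom{\ell}{2}$ and your counting does not exclude it.  The paper closes precisely this kind of case with structural input you have not used: if $\lk_\Delta(v_0)$ is a suspension one invokes Theorem~\ref{thm:polar_values} to factor $\Delta\cong\Gamma\ast C_{\pi_\Delta+3}$ and compute directly, and if not one produces $2(\pi_\Delta-1)+1$ explicit missing $L$--$I$ edges via Lemma~\ref{lem:u2facets}.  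Some ingredient of that flavor is needed to finish your argument.
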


\begin{proof}
The proof is done by sharpening a naive upper bound.

From Lemma~\ref{lem:gamma_values}, we have
\begin{equation}
\gamma_2=f_2-(2d-3)f_1+2d(d-2). \tag{*}\label{eqn:e}
\end{equation}
Therefore $\gamma_2^{\Delta}<\binom{\ell}{2}$ if and only if
\begin{equation}
f_2\leq \binom{f_1}{2}-3f_1+5d-1=:C. \tag{**}\label{eqn:ee}
\end{equation}
Let $v_0\in\Delta$ such that $\iota(v_0)=\pi_{\Delta}:=a+1$ and let $L:=\lk_\Delta(v_0)$.
Then $\gamma_1^L=\ell-a$ and by induction, $\gamma_2^{L}\leq \binom{\ell-a}{2}$.

{\bf Case 1) $L$ is a suspension.}
Say $L=\Sigma\Gamma$.
By Theorem \ref{thm:polar_values}, $\Delta\cong \Gamma \join C_{3+\pi_{\Delta}}$.
Then $\gamma_2^\Delta=a(\ell-a)+\gamma_2(\Gamma) \leq a(\ell-a)+\binom{\ell-a}{2}$.
As $a=\pi_{\Delta}-1\geq 2$, we conclude $\gamma_2^\Delta< \binom{\ell}{2}$ as desired.

{\bf Case 2) $L$ is not a suspension.}
By $\eqref{eqn:e}$, $$f_2^L\leq \binom{\ell-a}{2} + (2d-5)(2d+\ell-a-2)-2(d-1)(d-3)=:N_1.$$
Let $I:=\del_\Delta(\st_\Delta(v_0))$ and  decompose the set of edges $\edge(\Delta)$ as
\[ \edge(\Delta)=
 \edge(L) \sqcup \left\{\{v_0,u\}: u\in \verti(L)\right\} \sqcup \edge(I) \sqcup \left\{\{u,w\}\in\Delta: u\in \verti(L), w\in \verti(I)\right\}.
\]
From this decomposition, we get a naive upper bound for the number of edges in $\edge(\Delta)$:
\[
 |\edge(\Delta)| \leq N_1 + (2d+\ell-a-2) + \binom{a+1}{2} + (a+1)(2d+\ell-a-2) =: B.
\]
Comparing this $B$ with the right-hand side $C$ of \eqref{eqn:ee}, a direct computation shows that $B=C+2a+1$.
To complete the proof, it suffices to improve the estimate on the cardinality of $\edge(L,I):=\left\{\{u,w\}\in\Delta: u\in \verti(L), w\in \verti(I)\right\}$  by showing
$$|\edge(L,I)| \leq |\verti(L)||\verti(I)|-2a-1.$$

As $|\verti(I)|>1$, clearly there exists a missing edge $uw$ of $\Delta$ with $u\in L$ and $w\in I$.
By Lemma~\ref{lem:u2facets}, there are 2 disjoint facets $F_1$ and $F_2$ in $L$ neither of which contains $u$.
Since~$\Delta$ is a pseudomanifold, $F_1$ (respectively $F_2$) is contained in a unique facet involving a vertex $u_{F_1}$ (resp.~$u_{F_2}$, these vertices may coincide) in $I$.
For each $x\in \verti(I)\setminus \{u_{F_1}\}$, there exists a missing edge between $x$ and a vertex of $F_1$.
The same argument can be applied to $u_{F_2}$ and $F_2$ to save $2a$ edges, none of which contains $u$, thus $|\edge(L,I)| \leq |\verti(L)||\verti(I)|-2a-1$, completing the proof.
\end{proof}

\begin{proof}[Proof of Theorem~\ref{thm:gamma2_bounded}]

We prove the theorem by induction on $\ell$.
By Lemma~\ref{lem:gamma_zero}, if $\ell=0$, then~$\Delta$ is an octahedral sphere and $\gamma_i=0$ for all $i\geq 2$.
By Lemma~\ref{lem:susp_pentagon}, $\ell=1$ if and only if~$\Delta$ is a repeated suspension over a pentagon, and $\gamma_2=\binom{1}{2}=0$ holds.

Hence assume $\ell>1$ and that the result holds for all $\Delta$ with $\gamma_1^{\Delta}< \ell$.
By Lemma~\ref{lem:gamma_suspension}, we can remove pairs of suspension vertices leaving the $\gamma$-vector unchanged.
So, assume that~$\Delta$ has no pair of suspension vertices, so that the polar size of $\Delta$ satisfies $\pi_\Delta\geq 2$.
We distinguish two cases: $\pi_{\Delta}=2$ or $\pi_{\Delta}>2$.

{\bf Case $\pi_{\Delta}=2$.} Let $v_0\in\Delta$ be such that $\iota(v_0)=2$, and denote the antipodes of $v_0$ by $x$ and $y$.
By Lemma~\ref{lem:alex_dual} the edge $e:=\{x,y\}$ exists. Let $J:=\lk_\Delta(e)$ and $L:=\lk_\Delta(v_0)$.
By Lemma~\ref{lem:iota_equals_2_eqn}, we get
\[
 \gamma^\Delta(t)=\gamma^{L}(t)+t\gamma^J(t).
\]
By Lemma~\ref{lem:polar_values_i}, $\gamma_1^{L}=\ell-1$.
The dimension of~$J$ is $2$ less than of $\Delta$, and the number of vertices decreases by at least 5 from $\Delta$: we remove $v_0$, the two antipodes of $v_0$ and at least two vertices from $L$.
Therefore $\gamma_1^J\leq \ell-1$, using Lemma~\ref{lem:gamma_values}.
By the induction hypothesis on $L$, $\gamma_2^{L}\leq \binom{\ell-1}{2}$.
We thus get $\gamma_2^{\Delta} = \gamma_2^{L}+\gamma_1^J\leq \binom{\ell-1}{2}+(\ell-1)=\binom{\ell}{2}$.

We now prove the equality case.
Assume that $\gamma_2^{\Delta}=\binom{\ell}{2}$.
By the above discussion, we have $\gamma_2^{\Delta^{\downarrow e}}=\binom{\ell-1}{2}$ and $\gamma_1^J=\ell-1$.
By induction on $\ell$, $\Delta^{\downarrow e}\cong\Sigma^{k}\ast^{\ell-1}C_5$, with $k\geq 1$, as $\Delta^{\downarrow e}$ is the suspension $\Sigma L$.
Since $\gamma_1^L=\gamma_1^J=\ell-1$ and $J$ is $(d-3)$-dimensional, the number of vertices of $J$ is exactly $5$ less than of $\Delta$ and forces $L=\Sigma J$.
Whence $\Delta^{\downarrow e}\cong \Sigma^2 J$ and consequently, $J\cong\Sigma^{m}\ast^{\ell-1}C_5$, for some $m\geq 0$.
By Theorem~\ref{thm:extract_cycle} applied on the link $L=\Sigma J$, $\Delta\cong J\join C_5\cong \Sigma^{m}\ast^{\ell}C_5$.

{\bf Case $\pi_{\Delta}>2$.} By Lemma~\ref{lem:polar_size_3_on_gamma2} $\gamma_2^{\Delta}<\binom{\ell}{2}$ which implies the equality part as well since the polar size of $\Sigma^{m} \ast^{\ell} C_5$ is at most $2$.
\end{proof}

\section{Bounds on $\gamma_{\gamma_1}^{\Delta}$ and a flag analog of Perles' theorem}~\label{sec4:gamma_gamma1}

In this section, we present an analog of Perles' result on the boundedness of combinatorial types of $k$-skeleta of polytopes with fixed toric $g_1$.
This theorem is studied in detail in the survey article \cite{kalai_aspects_1994}.

Perles proved that for fixed $k$ and $b$, if $d$ is large enough then any $d$-polytope with toric $g_1=b$ has the same $k$-skeleton as some other $d$-polytope which is a pyramid over a lower dimensional polytope.
In the flag case, the role of a pyramid is played by suspension, as the following (stronger) assertion shows.

\begin{lemma}\label{lem:bound_dim}
Let $\Delta$ be a $(d-1)$-dimensional flag homology sphere on $2d+\ell$ vertices, with $\ell\geq 0$.
If $d\geq2\ell+1$, then $\Delta\cong\Sigma\Gamma$, where $\Gamma$ is a $(d-2)$-dimensional flag homology sphere.
\end{lemma}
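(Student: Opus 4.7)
I would prove the lemma by induction on $\ell$; by Lemma~\ref{lem:polar_values_iii} this reduces to showing $\pi_\Delta = 1$. The base case $\ell = 0$ is immediate since Lemma~\ref{lem:polar_values_ii} already forces $\pi_\Delta \in \{1\}$. For the inductive step, fix $\ell \geq 1$, assume the lemma for all smaller values of $\gamma_1$, and suppose toward contradiction that $\pi_\Delta \geq 2$.

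I would then extract structure from a minimum-antipode vertex. Pick $v_0 \in \verti(\Delta)$ with $\iota(v_0) = \pi_\Delta$ and set $L := \lk_\Delta(v_0)$. By Lemma~\ref{lem:polar_values_i}, $L$ is a $(d-2)$-dimensional flag homology sphere with $\gamma_1^L = \ell - \pi_\Delta + 1 \leq \ell - 1$. Its ambient dimension $d-1$ satisfies $d - 1 \geq 2\ell \geq 2\gamma_1^L + 1$, where the latter inequality uses $\pi_\Delta \geq 2$. The inductive hypothesis thus applies to $L$, yielding $L \cong \Sigma \Gamma$ for some $(d-3)$-dimensional flag homology sphere $\Gamma$. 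Theorem~\ref{thm:extract_cycle} then gives $\Delta \cong \Gamma \join C_{\pi_\Delta + 3}$; multiplicativity of the $\gamma$-polynomial under joins shows $\gamma_1^\Gamma = \ell - \pi_\Delta + 1$.

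To close the argument I apply the inductive hypothesis to $\Gamma$: with $\gamma_1^\Gamma \leq \ell - 1$ and ambient dimension $d_\Gamma = d - 2 \geq 2\ell - 1$, one checks $d_\Gamma \geq 2\gamma_1^\Gamma + 1$ (again using $\pi_\Delta \geq 2$), so $\Gamma \cong \Sigma \Gamma'$ for some $\Gamma'$. Then $\Delta \cong \Sigma(\Gamma' \join C_{\pi_\Delta + 3})$ is itself a suspension, contradicting $\pi_\Delta \geq 2$; hence $\pi_\Delta = 1$, proving the lemma. The main subtlety lies in the tight boundary case $d = 2\ell + 1$, $\pi_\Delta = 2$: here $\Gamma$ has $\gamma_1^\Gamma = \ell - 1$ and $d_\Gamma = 2\ell - 1 = 2\gamma_1^\Gamma + 1$, so the inductive hypothesis is invoked with exact equality. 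Checking this requires care to distinguish the ambient dimension $d_\Gamma = \dim \Gamma + 1 = d - 2$ from the geometric dimension $\dim \Gamma = d - 3$.
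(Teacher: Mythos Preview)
Your proof is correct and follows essentially the same inductive strategy as the paper. The only cosmetic difference is that the paper argues the contrapositive (if $\Delta$ is not a suspension then $d<2\ell+1$) and splits into the two cases ``$L$ is a suspension'' and ``$L$ is not a suspension'', whereas you assume $d\geq 2\ell+1$ from the outset and invoke the inductive hypothesis on $L$ directly to force $L\cong\Sigma\Gamma$, thereby collapsing the case split; the subsequent passage through Theorem~\ref{thm:extract_cycle} and the second application of induction to $\Gamma$ (your $\Gamma$ is the paper's $J$) match the paper's Case~2 exactly.
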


\begin{proof}
The proof is by induction on $\ell$.
If $\ell=0$, by Lemma~\ref{lem:gamma_zero},~$\Delta$ is an octahedral sphere and the result follows.
Assume that the result holds for all flag homology spheres such that $\gamma_1<\ell$.
We prove the contraposition: if $\Delta\not\cong\Sigma\Gamma$, then $d<2\ell+1$.
The polar size of $\Delta$ satisfies $\pi_\Delta\geq 2$ by Lemma~\ref{lem:polar_values_iii}.
Let $v_0$ be a vertex of $\Delta$ such that $\iota(v_0)=\pi_\Delta$ and $L:=\lk_\Delta(v_0)$.
By Lemma~\ref{lem:polar_values_i}, $\gamma_1^L=\ell-\pi_\Delta+1\leq \ell-1$.

If $L$ is not a suspension, by the induction hypothesis $d-1<2\gamma_1^L+1$, or equivalently $d<2(\gamma_1^L+1)\leq 2\ell$.
Otherwise $L$ is a suspension, by Theorem~\ref{thm:extract_cycle}, $\Delta\cong J\join C_{\pi_\Delta+3}$, where $J$ is a $(d-3)$-dimensional flag homology sphere and $\gamma_1^J\leq \ell -1$.
Since~$\Delta$ is assumed not to be a suspension, $J$ is also not a suspension.
By the induction hypothesis on $J$, $d-2<2\gamma_1^J+1$ or equivalently $d<2\gamma_1^J+3=2(\gamma_1^J+1)+1\leq 2\ell +1$.
\end{proof}

\begin{theorem}\label{thm:bound_index}
Let $\ell\geq 0$ and $d\geq2$.
If $\Delta$ is a $(d-1)$-dimensional flag homology sphere on $2d+\ell$ vertices, then $0\leq \gamma_{\ell}\leq 1$, and $\gamma_j=0$ for all $j>\ell$.
Further, $\gamma_\ell=1$ if and only if $\Delta\cong \Sigma^m\join^\ell C_5$, for a certain $m\geq 0$.
\end{theorem}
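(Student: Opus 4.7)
The plan is to induct on $\ell=\gamma_1^\Delta$. The base cases $\ell=0$ and $\ell=1$ are handled by Lemmas~\ref{lem:gamma_zero} and~\ref{lem:susp_pentagon}, which characterize $\Delta$ as an octahedral sphere (with $\gamma(t)=1$) or as $\Sigma^{d-2}\join C_5$ (with $\gamma(t)=1+t$); both realize the announced extremal form. For $\ell\geq 2$, Lemma~\ref{lem:gamma_suspension} together with Lemma~\ref{lem:polar_values_iii} allows me to peel off all suspension vertices and assume $\pi_\Delta\geq 2$ without loss of generality, since stripping a $\Sigma$ preserves both $\gamma$ and the extremal form up to an extra $\Sigma$ factor.

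In the case $\pi_\Delta=2$, I choose $v_0$ with $\iota(v_0)=2$ and antipodes $x,y$, set $L=\lk_\Delta(v_0)$ and $J=\lk_\Delta(\{x,y\})$, and use the splitting from Lemma~\ref{lem:iota_equals_2}~ii):
\[
\gamma^\Delta(t)=\gamma^L(t)+t\,\gamma^J(t).
\]
By Lemma~\ref{lem:polar_values_i}, $\gamma_1^L=\ell-1$; and since $J$ is an equator of $L$ (from the proof of Lemma~\ref{lem:iota_equals_2}~i)) with $\verti(J)\subseteq\verti(L)$ and $|\verti(L)\setminus\verti(J)|\geq 2$ by Lemma~\ref{lem:alex_dual}, also $\gamma_1^J\leq\ell-1$. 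The inductive hypothesis applied to $L$ and $J$ gives $\gamma_j^L=\gamma_j^J=0$ for $j>\ell-1$, hence $\gamma_j^\Delta=0$ for $j>\ell$ and $\gamma_\ell^\Delta=\gamma_{\ell-1}^J\in\{0,1\}$. If $\gamma_\ell^\Delta=1$, induction identifies $J\cong\Sigma^{m'}\join^{\ell-1}C_5$; a vertex count pins $|\verti(L)\setminus\verti(J)|=2$ and the two extra vertices span a missing edge (Lemma~\ref{lem:alex_dual}), so a pseudomanifold/cone argument shows each of them has link $J$ inside $L$, forcing $L\cong\Sigma J$. Theorem~\ref{thm:extract_cycle}~i) applied at $v_0$ then gives $\Delta\cong J\join C_5\cong\Sigma^{m'}\join^\ell C_5$.

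In the case $\pi_\Delta\geq 3$, I must improve the bound to $\gamma_\ell^\Delta=0$. Pick $v_0$ with $\iota(v_0)=\pi_\Delta$, so that $\gamma_1^L=\ell-\pi_\Delta+1\leq\ell-2$ by Lemma~\ref{lem:polar_values_i}. If $L$ is a suspension, Theorem~\ref{thm:extract_cycle}~i) factors $\Delta\cong\Gamma\join C_{\pi_\Delta+3}$ with $\gamma^\Delta(t)=\gamma^\Gamma(t)(1+(\pi_\Delta-1)t)$; induction on $\Gamma$ bounds $\deg\gamma^\Gamma\leq\ell-2$, hence $\deg\gamma^\Delta\leq\ell-1$. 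Otherwise the antipode subcomplex of $v_0$ is connected (Lemma~\ref{lem:alex_dual} with equator $L$), so it contains an edge $e=\{x_i,x_j\}$; selecting such $e$ outside any induced $4$-cycle and applying Lemma~\ref{lem:gamma_vertsplit} yields
\[
\gamma^\Delta(t)=\gamma^{\Delta^{\downarrow e}}(t)+t\,\gamma^{\lk_\Delta(e)}(t),
\]
where $\gamma_1^{\Delta^{\downarrow e}}=\ell-1$ and $\gamma_1^{\lk_\Delta(e)}\leq\ell+2-\iota(x_i)\leq\ell-1$, so induction on both terms gives $\gamma_\ell^\Delta\leq 1$.

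The main obstacle lies in this last subcase: one has to guarantee the existence of a contractable antipode-edge (perhaps falling back on a vertex-split if none exists), and then exclude $\gamma_\ell^\Delta=1$ by arguing that the forced extremal structure $\lk_\Delta(e)\cong\Sigma^{m''}\join^{\ell-1}C_5$ would propagate up to $\Delta\cong\Sigma^m\join^\ell C_5$, a sphere of polar size $2$, contradicting $\pi_\Delta\geq 3$.
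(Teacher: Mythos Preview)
Your argument in the case $\pi_\Delta \geq 3$ with $L$ not a suspension has a genuine gap, which you yourself flag as ``the main obstacle.'' You need an antipode-edge $e$ not contained in any induced $4$-cycle, but no such edge need exist; and even granting one, the contraction formula only yields $\gamma_\ell^\Delta \leq 1$, not $\gamma_\ell^\Delta = 0$. Your proposed rescue---propagating the extremal structure of $\lk_\Delta(e)$ up to $\Delta$---is not justified: knowing $\lk_\Delta(e) \cong \Sigma^{m''}\join^{\ell-1}C_5$ and $\gamma_1^{\Delta^{\downarrow e}} = \ell-1$ does not pin down $\Delta$, since the vertex-split reconstructing $\Delta$ from $\Delta^{\downarrow e}$ depends on data you have not controlled.

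The paper avoids this subcase entirely by first proving Lemma~\ref{lem:bound_dim}: if $d \geq 2\ell + 1$ then $\Delta$ is a suspension. After stripping all suspension pairs one is reduced to $d \leq 2\ell$; if $d < 2\ell$ then $\gamma_\ell = 0$ for degree reasons, and if $d = 2\ell$ then $L$ has dimension $d-2 = 2\ell-2$ with $\gamma_1^L = \ell - \pi_\Delta + 1 \leq \ell - 1$, so Lemma~\ref{lem:bound_dim} applied to $L$ forces $L$ to be a suspension. Thus your problematic subcase never arises, and Theorem~\ref{thm:extract_cycle} gives $\Delta \cong J \join C_{\pi_\Delta+3}$ uniformly for every $\pi_\Delta \geq 2$, from which $\gamma_\ell^\Delta = (\pi_\Delta - 1)\gamma_{\ell-1}^J$ and the induction finish cleanly. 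This dimension-bound lemma also makes your $\pi_\Delta = 2$ equality argument unnecessary: you do not need the ad~hoc hemisphere/cone argument to conclude $L \cong \Sigma J$, since Lemma~\ref{lem:bound_dim} hands it to you directly.
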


\begin{proof}
By Lemma~\ref{lem:bound_dim}, if $d-1\geq 2\ell$, we remove the suspension vertices until we get down to a homology sphere of dimension at most $2\ell-1$.
Therefore, $\gamma_j=0$ for all $j>\ell$.
The proof for $\gamma_\ell$ is by induction on $\ell$.
If $\ell=0$, by Lemma~\ref{lem:gamma_zero},~$\Delta$ is an octahedral sphere $\Sigma^m\join^0 C_5$, $\gamma_0=1$, and $\gamma_i=0$ for all $i\geq 1$.
So assume that the result holds for all flag homology spheres with $\gamma_1<\ell$.

If the dimension of $\Delta$ satisfies $d-1\leq 2\ell-2=2(\ell-1)$, by the above $\gamma_\ell=0$.
Additionally, the sphere~$\Delta$ can not be isomorphic to $\Sigma^m\join^\ell C_5$ since the latter has dimension $2\ell+m-1\geq 2\ell-1>2\ell-2\geq d-1$.
Otherwise, $d=2\ell$.
As $\Delta$ does not have suspension vertices, let $v_0$ be a vertex of~$\Delta$ with $\iota(v_0)=\pi_\Delta\geq 2$ and denote $L:=\lk_\Delta(v_0)$.
The conditions of Lemma~\ref{lem:bound_dim} apply on $L$, so that~$L$ is a suspension $\Sigma J$, where~$J$ is a $(2\ell-3)$-dimensional flag homology sphere with $\gamma_1^J\leq \ell-1$.
By Theorem~\ref{thm:extract_cycle}, $\Delta\cong J \join C_{\pi_\Delta+3}$.
Therefore, $\gamma^\Delta(t)=(1+(\pi_\Delta-1)t)\gamma^J(t)$ and $\gamma_\ell^\Delta$ is equal to $(\pi_\Delta-1)\gamma_{\ell-1}^J$.

If $\gamma_1^J<\ell-1$, then $\gamma_j^J=0$ for all $j\geq \ell-1$ by the above.
Therefore $\gamma_\ell^\Delta=0$.
Otherwise $\gamma_1^J=\ell-1$ which implies that $\pi_\Delta=2$ and $\Delta\cong J \join C_{5}$.
Thus $\gamma_\ell^\Delta=\gamma_{\ell-1}^J$.
As $\gamma_1^J=\ell-1$, by induction on $\ell$, $0\leq\gamma_{\ell-1}^J\leq 1$, and $\gamma_{\ell-1}^J=1$ if and only if $J\cong \Sigma^m\join^{\ell-1} C_5$, for some $m\geq 0$.
Thus $\Delta\cong \Sigma^m\join^\ell C_5$ completing the proof.
\end{proof}

Along similar lines to those of Lemma~\ref{lem:polar_size_3_on_gamma2}, the following corollary gives another relation between polar sizes $\pi_\Delta\geq 3$ and entries of the $\gamma$-vector.

\begin{corollary}\label{cor:polar_size_3_degree_gamma}
If $\Delta$ is a $(d-1)$-dimensional flag homology sphere on $2d+\ell$ vertices with $\pi_\Delta\geq 3$, and $d\geq 3$, then $\gamma_{j}^\Delta=0$, for all $j\geq \ell-\pi_\Delta+2$.
Further, if $d\geq 2(\ell-\pi_\Delta+2)$, then $\pi_\Delta-1$ divides the highest degree coefficient of $\gamma^\Delta(t)$.
\end{corollary}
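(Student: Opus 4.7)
The plan is to split the argument by the size of $d$ relative to $\ell-\pi_{\Delta}$, reducing the nontrivial case to an application of Theorem~\ref{thm:extract_cycle}~i). Since $\Delta$ is $(d-1)$-dimensional, $\deg\gamma^{\Delta}(t)\leq\lfloor d/2\rfloor$, so if $d\leq 2\ell-2\pi_{\Delta}+3$, then $\deg\gamma^{\Delta}\leq\ell-\pi_{\Delta}+1$; the first assertion is automatic in this range, and the hypothesis $d\geq 2(\ell-\pi_{\Delta}+2)$ of the second fails, leaving nothing to prove.

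Assume therefore that $d\geq 2(\ell-\pi_{\Delta}+2)$. Pick $v_0\in\verti(\Delta)$ with $\iota(v_0)=\pi_{\Delta}$ and set $L:=\lk_{\Delta}(v_0)$. By Lemma~\ref{lem:polar_values_i}, $L$ is a $(d-2)$-dimensional flag homology sphere with $\gamma_1^L=\ell-\pi_{\Delta}+1$. The standing hypothesis rewrites as $(d-1)\geq 2\gamma_1^L+1$, so Lemma~\ref{lem:bound_dim} applied to $L$ shows $L$ is a suspension. Theorem~\ref{thm:extract_cycle}~i) then yields $\Delta\cong\Gamma\join C_{\pi_{\Delta}+3}$ for a $(d-3)$-dimensional flag homology sphere $\Gamma$ with $\gamma_1^{\Gamma}=\ell-\pi_{\Delta}+1$, and
\[
  \gamma^{\Delta}(t)=\gamma^{\Gamma}(t)\bigl(1+(\pi_{\Delta}-1)t\bigr).
\]

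From this factorization the second assertion is immediate: the leading coefficient of $\gamma^{\Delta}$ equals $(\pi_{\Delta}-1)$ times the leading coefficient of $\gamma^{\Gamma}$. For the first assertion, Theorem~\ref{thm:bound_index} applied to $\Gamma$ forces $\gamma_j^{\Gamma}=0$ for $j>\ell-\pi_{\Delta}+1$, so the product gives $\gamma_j^{\Delta}=0$ for $j>\ell-\pi_{\Delta}+2$. The main obstacle is the boundary index $j=\ell-\pi_{\Delta}+2$, where
\[
  \gamma_{\ell-\pi_{\Delta}+2}^{\Delta}=(\pi_{\Delta}-1)\,\gamma_{\ell-\pi_{\Delta}+1}^{\Gamma}.
\]
The extremal clause of Theorem~\ref{thm:bound_index} says $\gamma_{\ell-\pi_{\Delta}+1}^{\Gamma}\in\{0,1\}$, and equality to $1$ forces $\Gamma\cong\Sigma^{m}\join^{\ell-\pi_{\Delta}+1}C_5$ for some $m\geq 0$. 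To exclude this case, I will observe that any vertex of a $C_5$-factor of $\Gamma$ has exactly two non-neighbours in $\Delta$, namely its two non-neighbours in that $C_5$, contradicting $\pi_{\Delta}\geq 3$. Hence $\gamma_{\ell-\pi_{\Delta}+1}^{\Gamma}=0$ and the boundary vanishing follows.
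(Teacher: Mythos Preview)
Your argument follows the same route as the paper's proof and is correct except for one small omission at the very end. When you invoke ``a vertex of a $C_5$-factor of $\Gamma$'' to derive the contradiction, you are implicitly assuming that $\Gamma\cong\Sigma^{m}\join^{\ell-\pi_{\Delta}+1}C_5$ actually has a $C_5$-factor, i.e., that $\ell-\pi_{\Delta}+1\geq 1$. You have not verified this. The fix is immediate: if $\ell-\pi_{\Delta}+1=0$, then $\Gamma$ is an octahedral sphere of dimension $d-3\geq 1$ (recall $d\geq 2(\ell-\pi_{\Delta}+2)\geq 4$ in this branch), so $\Delta\cong\Gamma\join C_{\pi_{\Delta}+3}$ has a suspension vertex, forcing $\pi_{\Delta}=1$, contrary to $\pi_{\Delta}\geq 3$. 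Equivalently, as the paper notes at the outset, Lemma~\ref{lem:polar_values_ii} and~\ref{lem:polar_values_iv} together with $\pi_{\Delta}\geq 3$ and $d\geq 3$ already force $\ell\geq\pi_{\Delta}$. With this one-line check added, your proof is complete and coincides with the paper's; in fact you spell out the exclusion of $\gamma_{\ell-\pi_{\Delta}+1}^{\Gamma}=1$ more explicitly than the paper does.
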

\begin{proof}
Having $\pi_\Delta\geq 3$ and $d\geq 3$ implies that $\ell\geq 3$ by Lemma~\ref{lem:polar_values_ii} and~\ref{lem:polar_values_iv}.
If $d<2(\ell-\pi_\Delta+2)$ then $\gamma^\Delta_j=0$, for all $j\geq \ell-\pi_\Delta+2$.
Therefore, assume that $d\geq 2(\ell-\pi_\Delta+2)$.
Let $v_0\in\Delta$ be such that $\iota(v_0)=\pi_\Delta$, and denote $L:=\lk_\Delta(v_0)$.
The conditions of Lemma~\ref{lem:bound_dim} apply on~$L$ since $\pi_\Delta\geq3$, so that $L=\Sigma J$, where $J$ is a $(d-3)$-dimensional flag homology sphere.
By Theorem~\ref{thm:extract_cycle}, $\Delta\cong J\join C_{\pi_\Delta+3}$ and $\pi_\Delta-1$ divides the highest degree coefficient of $\gamma^{\Delta}(t)$.
Since $\gamma^J_1=\ell-\pi_\Delta+1$, the degree of $\gamma^J(t)$ is at most $\ell-\pi_\Delta$ (it can not be $\ell-\pi_\Delta+1$ by Theorem~\ref{thm:bound_index} and $\pi_{\Delta}\geq 3$), whence the degree of $\gamma^\Delta(t)$ is at most $\ell-\pi_\Delta+1$.
\end{proof}

The following result gives a ``flag'' analog to Perles' theorem \cite[Theorem~1.1]{kalai_aspects_1994}.

\begin{theorem}
Let $d,k\geq 1$, $b\geq 0$ and $f_{b,k}(d)$ be the number of combinatorial types of $k$-skeleta of flag homology spheres of dimension $d-1$ with $\gamma_1=b$.
Given $b$ and $k$, there exists a constant $c_{b,k}$ such that $f_{b,k}(d)\leq c_{b,k}$.
\end{theorem}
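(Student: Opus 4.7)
The plan is to exploit Lemma~\ref{lem:bound_dim} to decompose every flag homology $(d-1)$-sphere $\Delta$ with $\gamma_1 = b$ as an iterated suspension over a ``core'' complex drawn from a finite family determined by $b$ alone. A uniform bound on the core translates directly into a uniform bound on the spheres themselves, and in particular on their $k$-skeleta.

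First I would iterate the following reduction: as long as the current sphere is a suspension, write it as $\Sigma \Gamma$ and replace it by $\Gamma$. Each step strictly reduces the dimension, and Lemma~\ref{lem:gamma_suspension} guarantees that the $\gamma$-polynomial (hence $\gamma_1 = b$) is preserved throughout. The flagness and homology-sphere property of each intermediate stage follow from Lemma~\ref{lem:link_flag} applied to a suspension vertex. The procedure terminates at a flag homology sphere $\Gamma^{\ast}$ of dimension $d^{\ast} - 1$ with $\gamma_1 = b$ that is not a suspension, and the contrapositive of Lemma~\ref{lem:bound_dim} then forces $d^{\ast} \leq 2b$, so that $\Gamma^\ast$ has at most $2d^\ast + b \leq 5b$ vertices.

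Second, I would observe that the family $\mathcal{S}(b)$ of flag homology spheres with $\gamma_1 = b$ and dimension at most $2b-1$ is finite, since it consists of simplicial complexes on at most $5b$ vertices. Every flag homology $(d-1)$-sphere $\Delta$ with $\gamma_1 = b$ is of the form $\Sigma^{d - d^\ast} \Gamma^{\ast}$ for some $\Gamma^\ast \in \mathcal{S}(b)$, and for fixed $d$ each element of $\mathcal{S}(b)$ contributes at most one such $\Delta$, because the exponent $d - d^\ast$ is determined by $d$ and $\dim \Gamma^\ast$. Therefore $f_{b,k}(d) \leq |\mathcal{S}(b)|$ for every $d$, and I can take $c_{b,k} := |\mathcal{S}(b)|$, which depends only on $b$ (in fact not on $k$).

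I do not expect any serious obstacle: Lemma~\ref{lem:bound_dim} already packages the essential geometric content, and the remainder is a termination argument for the stripping together with a finiteness observation for the base. The only mild subtlety is tracking that every intermediate complex in the stripping is still a flag homology sphere on which the next iteration is applicable, which is immediate from the standard link lemmas.
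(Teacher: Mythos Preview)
Your proof is correct and follows essentially the same approach as the paper: both use Lemma~\ref{lem:bound_dim} to iteratively strip suspension pairs until the dimension is at most $2b-1$, and then invoke the finiteness of flag homology spheres on at most $5b$ vertices. Your formulation via the finite family $\mathcal{S}(b)$ is slightly cleaner than the paper's case split (the paper bounds $f_{b,k}(d)\le f_{b,k}(2b)$ by stripping when $d>2b$ and suspending when $d\le 2b$), and it makes explicit that the constant can be taken independent of $k$, but the content is the same.
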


\begin{proof}
 If $d\geq 2b+1$, following Lemma~\ref{lem:bound_dim}, by removing suspension vertices, every combinatorial type of $k$-skeleta of flag homology spheres of dimension $d-1$ corresponds to a unique one of dimension $2b-1$.
 Indeed, the result is unique since suspension pairs are uniquely determined by their two vertices and these (unordered) pairs are pairwise disjoint, thus the order of removal of suspension vertices is not important.
 Therefore $f_{b,k}(d)\leq f_{b,k}(2b):=c_{b,k}$.
 Otherwise, $d\leq 2b$ and for each combinatorial type of $k$-skeleta of flag homology spheres of dimension $d-1$, one can create a combinatorial type of $k$-skeleta of flag homology spheres of dimension $2b-1$ by taking suspensions and each will be combinatorially different.
 Thus, $f_{b,k}(d)\leq f_{b.k}(2b):=c_{b,k}$.
\end{proof}

Theorem~\ref{thm:intro_d} follows seeing that $F_{b,k}(d)=\sum_{i=0}^bf_{i,k}(d)\leq (b+1)c_{b,k}$.

\section{Bounds on $\gamma_{\gamma_1-1}$ and forbidden $\gamma$-vectors}~\label{sec5:gamma_gamma1-1}

We start with an observation on the structure of equators in the maximizers of $\gamma_{\gamma_1}$ ($=1$), which plays a role in the characterization of extremal examples of maximizers of $\gamma_{\gamma_1 -1}$ provided $\gamma_{\gamma_1}=0$, given in the next theorem.

\begin{lemma}\label{lem:equators_type}
Let $m,k\geq0$ such that $m+k\geq 1$.
All equators of $\Sigma^m\join^k C_5$ are of the form $\Sigma^{m-1}\join^{k}C_5$ or $\Sigma^{m+1}\join^{k-1}C_5$, and every equator is the link of a vertex.
\end{lemma}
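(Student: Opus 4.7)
The plan is to decompose $\Delta := \Sigma^m \join^k C_5$ as $F_1 \join \cdots \join F_{m+k}$, where $F_i = S^0$ for $i \leq m$ and $F_i = C_5$ for $i > m$, and to classify the equators via this join structure. I would first invoke the standard fact that induced subcomplexes of a join decompose as joins of induced subcomplexes: any equator $\Gamma$ has the form $\Gamma = \Gamma_1 \join \cdots \join \Gamma_{m+k}$ with $\Gamma_i$ an induced subcomplex of $F_i$. Then, by the K\"unneth formula for the reduced homology of joins over $\mathbb{K}$, namely $\widetilde{H}_{p+q+1}(A \join B) \cong \widetilde{H}_p(A) \otimes \widetilde{H}_q(B)$, a join of complexes is a homology sphere if and only if each factor is; and, with the convention $\dim \emptyset = -1$, dimensions add by $\dim(A \join B) = \dim A + \dim B + 1$.

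Next, I would enumerate the induced subcomplexes of each factor that are homology spheres. In $S^0$ the only options are $\emptyset$ (dimension $-1$) and $S^0$ itself (dimension $0$). In $C_5$ the options are $\emptyset$, the five pairs of non-adjacent vertices (each an $S^0$ of dimension $0$), and $C_5$ itself (dimension $1$): any induced subcomplex on $3$ or $4$ vertices of $C_5$ is a contractible path, possibly together with an isolated vertex, hence not a homology sphere. This short enumeration is the only non-mechanical step in the argument.

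Imposing the codimension-one condition $\dim \Gamma = \dim \Delta - 1 = m + 2k - 2$ translates to $\sum_i \dim \Gamma_i = k - 1$. The maximal choice $\Gamma_i = F_i$ yields the sum $k$, and since no factor can be enlarged beyond $F_i$, the equators correspond to decreasing exactly one factor by exactly one in dimension. There are precisely two ways to achieve this: replace an $S^0$ factor by $\emptyset$, giving $\Sigma^{m-1} \join^k C_5$; or replace a $C_5$ factor by one of its non-adjacent pairs, giving $\Sigma^m \join S^0 \join^{k-1} C_5 = \Sigma^{m+1} \join^{k-1} C_5$. This establishes the two stated forms.

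Finally, I would identify each equator with the link of a vertex. In the first case, the removed $S^0$ factor consists of two mutually non-adjacent vertices $v, v'$, and $\lk_\Delta(v)$ is the induced subcomplex on $\verti(\Delta) \setminus \{v, v'\}$, which is exactly this equator. In the second case, a non-adjacent pair of $C_5$ is the link in $C_5$ of the unique vertex lying between them; joining with the remaining factors realises the equator as the link of that vertex in $\Delta$. The only real obstacle is the inspection of induced subcomplexes of $C_5$; once that is done, the rest is bookkeeping for the dimension equation in the join decomposition.
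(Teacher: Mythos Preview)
Your proof is correct and follows essentially the same route as the paper's. Both arguments decompose an equator $\Gamma$ as a join $\Gamma_1\join\cdots\join\Gamma_{m+k}$ of induced subcomplexes of the factors, determine which induced subcomplexes of $S^0$ and $C_5$ can occur, impose the codimension-one constraint to force exactly one factor to drop one step in dimension, and then identify the result with a vertex link. The only difference is presentational: the paper compresses your enumeration into the single observation that an induced subcomplex of $Z$ with nontrivial homology must take, from each $C_5$, either all vertices, none, or a non-adjacent pair (and from each $S^0$ both or none), whereas you phrase the same restriction via K\"unneth and the requirement that each $\Gamma_i$ be a homology sphere. Your version is slightly more explicit (and in fact slightly cleaner, since purity of $\Gamma$ is what rules out the ``edge plus isolated vertex'' configuration in $C_5$, a case the paper's acyclicity phrasing glosses over); but the mathematics is the same.
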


\begin{proof}
Let $\Gamma$ be an equator of $Z:=\Sigma^m\join^k C_5$.
For an induced subcomplex of $Z$ \emph{not} to be acyclic, from each pair of suspension vertices either both or none are in $\Gamma$, and from each $C_5$ either all vertices, none, or exactly two non-adjacent ones are in $\Gamma$.
Combined with the fact that the dimension of $\Gamma$ is $1$ less than of $Z$, either $\Gamma\cong \Sigma^{m-1}\join^{k}C_5$ or $\Gamma\cong \Sigma^{m+1}\join^{k-1}C_5$. In the former case $\Gamma$ is the link of a suspension vertex, and in the latter case $\Gamma$ is the link of a vertex in some induced $C_5$.
\end{proof}

\begin{theorem}\label{thm:gamma_ell-1}
Let $\ell\geq 2$.
If $\Delta$ is a $(d-1)$-dimensional flag homology sphere on $2d+\ell$ vertices, then $\gamma_{\ell-1}\in\{0,1,2,\ell\}$. Furthermore, if $\gamma_\ell=0$, then $0\leq \gamma_{\ell-1}\leq 2$ and the case $\gamma_{\ell-1}=2$ holds if and only if
\[
	\Delta = \Sigma^{m}\ast^{\ell-2}C_5\ast C_6=:\Upsilon_1,
\]
for some $m\geq 0$, called ``first type", or $\Delta$ is obtained by an edge-subdivision of
\[
	\Sigma^{m}\ast^{\ell-1}C_5,
\]
at an edge adjacent to a suspension vertex, called ``second type" and denoted by $\Upsilon_2$.
\end{theorem}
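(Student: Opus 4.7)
The plan is to mirror the inductive framework used for Theorem~\ref{thm:bound_index}: strong induction on $\ell$, a suspension-stripping reduction to $\pi_\Delta\geq 2$, and the decomposition of Lemma~\ref{lem:iota_equals_2_eqn} when $\pi_\Delta=2$. The four-valued membership $\gamma_{\ell-1}\in\{0,1,2,\ell\}$ splits naturally by Theorem~\ref{thm:bound_index}: if $\gamma_\ell^\Delta=1$ then $\Delta\cong\Sigma^m\join^\ell C_5$, so $\gamma^\Delta(t)=(1+t)^\ell$ and $\gamma_{\ell-1}=\ell$. The remainder of the argument assumes $\gamma_\ell^\Delta=0$ and proves $\gamma_{\ell-1}\in\{0,1,2\}$ together with the classification. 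Using Lemma~\ref{lem:gamma_suspension} to strip suspension pairs, we may assume $\pi_\Delta\geq 2$; if $\pi_\Delta\geq 3$, Corollary~\ref{cor:polar_size_3_degree_gamma} already forces $\gamma_{\ell-1}=0$, so the real work is in $\pi_\Delta=2$.

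Pick $v_0$ with $\iota(v_0)=2$, let $x,y$ be its antipodes and $e=\{x,y\}$ the edge supplied by Lemma~\ref{lem:iota_equals_2_edge}, and set $L=\lk_\Delta(v_0)$, $J=\lk_\Delta(e)$. Lemma~\ref{lem:iota_equals_2_eqn} yields
\[
\gamma_{\ell-1}^\Delta=\gamma_{\ell-1}^L+\gamma_{\ell-2}^J\quad\text{and}\quad 0=\gamma_\ell^L+\gamma_{\ell-1}^J.
\]
Lemma~\ref{lem:polar_values_i} gives $\gamma_1^L=\ell-1$; as in the proof of Theorem~\ref{thm:gamma2_bounded}, $\gamma_1^J\leq\ell-1$. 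Theorem~\ref{thm:bound_index} then forces $\gamma_\ell^L=0$, hence $\gamma_{\ell-1}^J=0$, and bounds $\gamma_{\ell-1}^L\in\{0,1\}$; the inductive hypothesis on $J$ (or Theorem~\ref{thm:bound_index} when $\gamma_1^J<\ell-1$) gives $\gamma_{\ell-2}^J\in\{0,1,2\}$. The key extra observation is that $J$ is an equator of $L$: each vertex of $J$ is adjacent to both $x$ and $y$, hence also to $v_0$ since $x,y$ are $v_0$'s only non-neighbors, so $J\subseteq L$ has codimension one. When $\gamma_{\ell-1}^L=1$, Theorem~\ref{thm:bound_index} forces $L\cong\Sigma^{m'}\join^{\ell-1}C_5$, and Lemma~\ref{lem:equators_type} restricts $J$ to either $\Sigma^{m'-1}\join^{\ell-1}C_5$ (excluded by $\gamma_{\ell-1}^J=0$) or $\Sigma^{m'+1}\join^{\ell-2}C_5$ (giving $\gamma_{\ell-2}^J=1$). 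This rules out $\gamma_{\ell-1}^L=1$ paired with $\gamma_{\ell-2}^J=2$, yielding the sharp bound $\gamma_{\ell-1}^\Delta\leq 2$.

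For extremality $\gamma_{\ell-1}^\Delta=2$, only two profiles survive. In the case $(\gamma_{\ell-1}^L,\gamma_{\ell-2}^J)=(1,1)$, the analysis above pinpoints $L\cong\Sigma^{m'}\join^{\ell-1}C_5$ and $J\cong\Sigma^{m'+1}\join^{\ell-2}C_5=\lk_L(p)$ for a $C_5$-vertex $p$ of $L$. Since $\Delta^{\downarrow e}\cong\Sigma L$ (from the proof of Lemma~\ref{lem:iota_equals_2_eqn}), reversing the contraction realizes $\Delta$ as the stellar subdivision of $\Sigma^{m'+1}\join^{\ell-1}C_5$ at the edge connecting the new suspension vertex to $p$, which is adjacent to a suspension vertex, so $\Delta=\Upsilon_2$. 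In the case $(0,2)$, the inductive hypothesis classifies $J$ as first or second type at level $\ell-1$; an extraction argument parallel to Theorem~\ref{thm:extract_cycle} shows that the join factor of $J$ witnessing its type (a $C_6$ for the first type, the subdivided configuration for the second type) extends to a join factor of $\Delta$ itself, yielding $\Delta\cong\Upsilon_1$ or $\Delta\cong\Upsilon_2$ accordingly. The hard part will be this last propagation in case~$(0,2)$: unlike in Theorem~\ref{thm:extract_cycle} where a vertex link is directly a suspension of a codimension-2 factor, the extremal sub-configuration here is visible only after passing to $J=\lk_\Delta(e)$, so its extraction requires iteratively re-entering vertex links, using flagness of $\Delta$, the rigidity imposed by the joint constraint $\gamma_\ell^\Delta=0$ with $\gamma_{\ell-2}^J=2$, and the preservation of homology-sphere structure guaranteed by Lemma~\ref{lem:elementary}.
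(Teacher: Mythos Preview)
Your overall inductive framework matches the paper's proof almost exactly: strip suspensions, dispatch $\pi_\Delta\geq 3$ via Corollary~\ref{cor:polar_size_3_degree_gamma}, and for $\pi_\Delta=2$ use Lemma~\ref{lem:iota_equals_2_eqn} to write $\gamma_{\ell-1}^\Delta=\gamma_{\ell-1}^L+\gamma_{\ell-2}^J$. Your observation that $\gamma_\ell^\Delta=0$ forces $\gamma_{\ell-1}^J=0$, hence excludes the equator $\Sigma^{m'-1}\join^{\ell-1}C_5$ in the case $\gamma_{\ell-1}^L=1$, is in fact slightly cleaner than the paper's phrasing there.

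Where you diverge is the extremal profile $(\gamma_{\ell-1}^L,\gamma_{\ell-2}^J)=(0,2)$, and you are making this much harder than it is. You anticipate a delicate ``extraction argument parallel to Theorem~\ref{thm:extract_cycle}'' that propagates the $C_6$ or subdivided-$C_5$ factor from $J$ up to $\Delta$. The paper avoids this entirely with a one-line reduction: if $\gamma_{\ell-2}^J=2$ then necessarily $\gamma_1^J=\ell-1$ (otherwise $\gamma_1^J\leq\ell-2$ and Theorem~\ref{thm:bound_index} gives $\gamma_{\ell-2}^J\leq 1$). Since $J$ is an equator of $L$ and $\gamma_1^L=\gamma_1^J=\ell-1$, a vertex count shows $L=\Sigma J$, and then Theorem~\ref{thm:extract_cycle} applies \emph{directly} to give $\Delta\cong J\join C_5$. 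Now if $J$ is of first type $\Sigma^{m}\join^{\ell-3}C_5\join C_6$ (resp.\ second type, an edge-subdivision of $\Sigma^{m}\join^{\ell-2}C_5$ at a suspension-adjacent edge), then $\Delta\cong J\join C_5$ is of first type $\Sigma^{m}\join^{\ell-2}C_5\join C_6$ (resp.\ second type) at level $\ell$. No new extraction is needed; the ``hard part'' you flag dissolves.

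One further omission: you do not discuss the base case $\ell=2$. There $\gamma_{\ell-1}=\gamma_1=2$ is automatic, but the structural claim---that $\gamma_2=0$ forces $\Delta$ to be (up to suspensions) a hexagon or an edge-subdivision of $\Sigma C_5$ at a suspension-adjacent edge---requires a separate direct argument, which the paper carries out explicitly using the $\pi_\Delta\in\{2,3\}$ dichotomy and a small case analysis in dimensions $d\leq 3$.
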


\begin{proof}
We prove the theorem by induction on $\ell$.
If $\gamma_1=\ell=2$, the first statement trivially holds.
It remains to prove that if $\gamma_2=0$ then $\Delta$ is the corresponding $\Upsilon_1$ or $\Upsilon_2$.
The other direction is obtained by direct computation.
Essentially, the proof of \cite[Proposition 5.5]{nevo_gamma_2011} gives this structural result; however, we elaborate here for completeness.
By Lemma~\ref{lem:gamma_suspension}, we can assume that~$\Delta$ has no pair of suspension vertices, i.e., $\pi_\Delta\geq 2$.
By Lemma \ref{lem:polar_values_ii}, $\pi_\Delta\in\{2,3\}$.
Assume $\pi_\Delta=2=\ell$.
If $d\geq 4$, by Theorem~\ref{thm:iota_equals_ell}, $\Delta\cong C_5\join C_5$, so that $\gamma_2^\Delta=1$ which contradicts the assumption.
Therefore $d\leq 3$.
If $d=2$, only the $6$-gon has $\gamma_1=2$ and this concludes this case.
So, assume $d=3$ and let $v_0\in\Delta$ be such that $\iota(v_0)=2$, and denote the antipodes of $v_0$ by $x$ and $y$.
Let $e:=\{x,y\}\in\Delta$, $J:=\lk_\Delta(e)$ and $L:=\lk_\Delta(v_0)$.
By Lemma~\ref{lem:iota_equals_2_eqn}, we get
\[
 \gamma^\Delta(t)=\gamma^{L}(t)+t\gamma^J(t).
\]
By Lemma~\ref{lem:polar_values_i}, $\gamma_1^{L}=1$ and by Lemma~\ref{lem:susp_pentagon} (or directly) $L$ is a pentagon.
Furthermore, from the above information (note that $J$ consists of 2 nonadjacent vertices) one sees that the $8$-vertices flag $2$-sphere
$\Delta$ is obtained by taking a suspension over $L$ ($v_0$ is in the suspension pair) followed by a vertex split in the other suspension vertex, which in this case is in fact an edge-subdivision with respect to an edge adjacent to the other suspension vertex (not $v_0$), i.e., $\Upsilon_2$.
Assume now that $\pi_\Delta=3=\ell+1$.
If $d\geq 3$, by Lemma~\ref{lem:polar_values_iv}, $\Delta$ is an octahedral sphere, contradicting our assumption.
If $d=2$, again only the $6$-gon has $\gamma_1=2$ and this concludes the base case.

Let $\ell>2$ and assume the result to be true for all homology spheres such that $\gamma_1<\ell$ and $\gamma_\ell^\Delta=0$.
Indeed, by Theorem~\ref{thm:bound_index}, if $\gamma_\ell^\Delta=1$, then $\Delta$ is isomorphic to $\Sigma^{m}\ast^{\ell}C_5$ for some $m\geq 0$ and $\gamma_{\ell-1}^{\Delta}=\ell$ and the result follows.
If $d<2\ell-2$ then $\gamma_{\ell-1}=0$ and the dimension is too small to be isomorphic to the two candidates, finishing this case.
Since we assume that $\Delta$ does not contain suspension vertices, by Lemma~\ref{lem:bound_dim}, $d<2\ell+1$. It remains to consider the case $d\in \{2\ell-2,2\ell-1,2\ell\}$.

Assume that $\pi_\Delta=2$, let $v_0\in\Delta$ be such that $\iota(v_0)=\pi_\Delta$, denote $L:=\lk_\Delta(v_0)$, the two antipodes of $v_0$ by $x$ and $y$ and the link of $\{x,y\}$ by $J$.
We have $\gamma_1^L=\ell-1$ and $\gamma_1^J\leq \ell-1$ and by Lemma~\ref{lem:iota_equals_2_eqn}, $\gamma_{\ell-1}^{\Delta}=\gamma_{\ell-1}^{L}+\gamma_{\ell-2}^J$.
By Theorem~\ref{thm:bound_index}, $\gamma_{\ell-1}^L\in\{0,1\}$.

\textbf{Case $\gamma_{\ell-1}^L=1$}. Then  $L\cong \Sigma^m\join^{\ell-1}C_5$, for some $m\geq 0$ and by Lemma~\ref{lem:equators_type}, $J$ is isomorphic to either $\Sigma^{m-1}\join^{\ell-1}C_5$ or $\Sigma^{m+1}\join^{\ell-2}C_5$.
Therefore, $\gamma_{\ell-1}^\Delta\in\{2,\ell\}$.
In the case $\gamma_{\ell-1}^\Delta=2$, it means that $J$ is isomorphic to $\Sigma^{m+1}\join^{\ell-2}C_5$.
Further, $\Delta$ is obtained by taking a suspension over $L$, by vertices $v_0$ and say $v_1$, followed by a vertex splitting of $v_1$ where the equator of the splitting is $J$.
By Lemma~\ref{lem:equators_type} this equator is the link of a vertex $v_2$ of an induced $C_5$ in $L$,
which in turn makes~$J$ the link of the edge $e'=v_1v_2$ in $\Sigma L$.
Hence, the vertex-split is the same as the edge-subdivision at $e'$, making $\Delta$ a sphere of the second type.

\textbf{Case $\gamma_{\ell-1}^L=0$}. Then $\gamma_{\ell-1}^{\Delta}=\gamma_{\ell-2}^{J}\leq \ell-1$ by the induction hypothesis on $J$.
By Theorem~\ref{thm:bound_index} for $\Delta$, $\gamma_{\ell}^{\Delta}=0$.
We now show that $\gamma_{\ell-1}^{\Delta}\in\{0,1,2\}$ in this case.

\textbf{Subcase $\gamma_1^J=\ell-1$}.
Also $\gamma_1^L=\ell-1$, and $J$ is an equator in $L$, thus $L=\Sigma J$, and by Theorem~\ref{thm:polar_values} $\Delta=J \join C_5$.
As $\gamma_{\ell-1}^L=0$ also $\gamma_{\ell-1}^J=0$, and by the induction hypothesis $\gamma_{\ell-2}^{J}\in\{0,1,2\}$, and equals $2$ if and only if $J$ is of the first or second type.
Consequently, $\gamma_{\ell-1}^{\Delta}\in\{0,1,2\}$, and equals $2$ if and only if $\Delta$ is of the first or second type.

\textbf{Subcase $\gamma_1^J<\ell-1$}.
By Theorem~\ref{thm:bound_index}, $0\leq \gamma_{\ell-2}^{J}\leq 1$, and thus $0\leq \gamma_{\ell-1}^{\Delta}\leq 1$.

Finally assume that $\pi_\Delta\geq 3$.
Since $d\geq 3$, Corollary~\ref{cor:polar_size_3_degree_gamma} states that $\gamma_{j}=0$, for all $j\geq \ell-1 \geq \ell-\pi_\Delta+2$.
The complex $\Delta$ can not be either types since $\gamma^{\Upsilon_1}(t)$ and $\gamma^{\Upsilon_2}(t)$ have degree $\ell-1$.
\end{proof}

A closer look on the proof of Theorem~\ref{thm:gamma_ell-1} gives the following technical result, which restrict the possibilities of having $\gamma_{\ell-1}=1$ in Theorem~\ref{thm:gamma_ell-1}.
It
will be used to give infinitely many counterexamples to the converse of Nevo--Petersen's Conjecture~\ref{conj:Nevo-Petersen}.

\begin{theorem}\label{thm:poly}
Let $k\geq 3$, $q_k(t)=(1+t)^k$,
and $r(t)$ be a polynomial in $\mathbb{Z}[t]$ of degree at most $k-2$ with constant term $1$ which is not divisible by $(1+t)$.
The polynomial $p(t)=q_k(t)+tr(t)$ is not the $\gamma$-polynomial of any flag homology sphere.
\end{theorem}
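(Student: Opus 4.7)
The plan is to mimic the ``$\gamma_\ell=0$, $\gamma_{\ell-1}=1$'' branch of the proof of Theorem~\ref{thm:gamma_ell-1} and argue by contradiction, using the hypotheses on $r$ to rule out every intermediate configuration. Assume $\Delta$ is a flag homology sphere realizing $\gamma^\Delta=p$. Setting $\ell=\gamma_1^\Delta=k+1$, the bound $\deg r\leq k-2$ and direct coefficient extraction give $\gamma_\ell=0$ and $\gamma_{\ell-1}=1$. I would use Lemma~\ref{lem:gamma_suspension} to assume $\Delta$ has no suspension pair, so $\pi_\Delta\geq 2$; since $\gamma_k\neq 0$ forces $d\geq 2k\geq 6$, Corollary~\ref{cor:polar_size_3_degree_gamma} excludes $\pi_\Delta\geq 3$ (it would force $\gamma_k=0$), so $\pi_\Delta=2$. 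Then, picking $v_0$ with $\iota(v_0)=2$ and antipodes $\{x,y\}$, Lemma~\ref{lem:iota_equals_2}(ii) yields $p(t)=\gamma^L(t)+t\gamma^J(t)$, with $L=\lk_\Delta(v_0)$ (so $\gamma_1^L=k$) and $J=\lk_\Delta(\{x,y\})$ an equator of $L$.

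Comparing top coefficients, $\gamma_k^\Delta=\gamma_k^L+\gamma_{k-1}^J=1$, and by Theorem~\ref{thm:bound_index} one has $\gamma_k^L\in\{0,1\}$. If $\gamma_k^L=1$, then $L\cong\Sigma^m\join^k C_5$ with $\gamma^L=(1+t)^k$ and $\gamma^J=r$; but Lemma~\ref{lem:equators_type} restricts any equator of such $L$ to $\gamma$-polynomial $(1+t)^k$ or $(1+t)^{k-1}$, contradicting $\deg r\leq k-2$. If $\gamma_k^L=0$ and $\gamma_1^J=k$, a vertex count forces $L=\Sigma J$, so by Theorem~\ref{thm:polar_values}(i) $\Delta\cong J\join C_5$ and hence $(1+t)\mid p$; but $p(-1)=-r(-1)\neq 0$ by the hypothesis on $r$. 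If $\gamma_k^L=0$ and $\gamma_1^J=k-1$, Theorem~\ref{thm:bound_index} forces $J\cong\Sigma^{m'}\join^{k-1}C_5$ with $\gamma^J=(1+t)^{k-1}$, whence $\gamma^L(t)=(1+t)^{k-1}+tr(t)$, a polynomial of the same shape as $p$ with $k$ replaced by $k-1$ and the same $r$.

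The natural way to finish is by induction on $k$: assuming the theorem for $k-1$, the bound $\deg r\leq k-3=(k-1)-2$ is precisely what makes the inductive statement applicable to $L$, delivering the contradiction. The main obstacle is the remaining case $\deg r=k-2$, in which the inductive hypothesis does not fit $L$. Here I would extract extra information by applying Theorem~\ref{thm:gamma_ell-1} to $L$: since $\gamma_k^L=0$, the entry $\gamma_{k-1}^L=1+r_{k-2}$ must lie in $\{0,1,2\}$, forcing $r_{k-2}\in\{-1,1\}$ (as $r_{k-2}\neq 0$). When $r_{k-2}=1$, the extremal case of Theorem~\ref{thm:gamma_ell-1} pins down $L\cong\Upsilon_1$ or $\Upsilon_2$ with $\gamma^L=(1+t)^{k-2}(1+2t)$; matching coefficients yields $r(t)=(1+t)^{k-2}$, which is divisible by $(1+t)$ for $k\geq 3$, contradiction. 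When $r_{k-2}=-1$, the leading term of $\gamma^L$ cancels and one continues the reduction on $L$; after finitely many iterations the process either hits the extremal case above (yielding the same divisibility contradiction) or, in the base case $k=3$ with $r\equiv 1$, descends to a configuration requiring a sphere of $\gamma$-polynomial $1+2t$ with equator of $\gamma$-polynomial $1+t$, which is ruled out by Lemma~\ref{lem:equators_type} (the equators of $\Upsilon_1=\Sigma^m\join C_6$ have $\gamma\in\{1+2t,1\}$) combined with Lemma~\ref{lem:susp_pentagon} and an explicit check of the equators of $\Upsilon_2$. The delicate part is this termination analysis, balancing the decreasing $k$ against the possible cancellations arising from the leading coefficient of $r$.
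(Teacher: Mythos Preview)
Your argument tracks the paper's proof closely through the main case split: ruling out $\pi_\Delta\geq 3$, then splitting on $\gamma_k^L\in\{0,1\}$, and in the case $\gamma_k^L=0$ further on $\gamma_1^J\in\{k-1,k\}$. The cases $\gamma_k^L=1$ and $\gamma_1^J=k$ you handle essentially as the paper does (with minor cosmetic differences in how the contradiction is phrased).

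The gap is in the subcase $\gamma_1^J=k-1$. Here the paper does \emph{not} induct on $k$; instead it exploits the structure of $L$ directly. Since $\gamma_1^L=k$ and $\gamma_1^J=k-1$, exactly three vertices of $L$ lie outside $J$, and by Alexander duality one of them, call it $w_0$, is isolated in $\del_L(J)$, hence $\iota_L(w_0)=2$. Applying Lemma~\ref{lem:iota_equals_2} inside $L$ at $w_0$ gives $\gamma^L=\gamma^J+t\gamma^M$ with $M$ an equator of $J$. Substituting into $p=\gamma^L+t\gamma^J$ yields $p=(1+t)\gamma^J+t\gamma^M=(1+t)^k+t\gamma^M$, so $r=\gamma^M$. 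But $J\cong\Sigma^m\join^{k-1}C_5$, and Lemma~\ref{lem:equators_type} forces $\gamma^M\in\{(1+t)^{k-1},(1+t)^{k-2}\}$; since $k\geq 3$ this gives $(1+t)\mid r$, a contradiction. This one move disposes of the subcase uniformly.

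Your inductive route, by contrast, does not close. The inductive hypothesis for $k-1$ applies to $L$ only when $\deg r\leq k-3$, leaving $\deg r=k-2$ (and the entire base case $k=3$) to be handled separately. For $r_{k-2}=1$ your use of Theorem~\ref{thm:gamma_ell-1} is fine, but for $r_{k-2}=-1$ the phrase ``continue the reduction on $L$'' has no well-defined descent: passing to $L$ keeps $r$ unchanged while $k$ drops by one, so $\deg r$ now exceeds the required bound $(k-1)-2$ by \emph{two}, and the situation worsens at each step rather than terminating. Likewise, the base case $k=3$ must cover every $r(t)=1+r_1t$ with $r_1\neq 1$, not just $r\equiv 1$. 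The paper's $w_0$-trick sidesteps all of this; it is the key idea you are missing.
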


\begin{proof}
Assume to the contrary, that it is the case.
Let $\Delta$ be a flag homology sphere such that $\gamma^\Delta(t)=p(t)=\sum_{i\geq 0}p_it^i$ of smallest dimension $d-1$.
As $\gamma_1^\Delta=\binom{k}{1}+1=k+1$ and $\gamma_k^{\Delta}=\binom{k}{k}+0=1$, the dimension is bounded by $2k\leq d<2k+3$ by Lemma~\ref{lem:bound_dim}.
Let $v_0$ be a vertex of $\Delta$ with $\iota(v_0)=\pi_\Delta$ and denote $L:=\lk_\Delta(v_0)$.
By the minimality of $d$, $\iota(v_0)\geq 2$.
The dimension of $L$ is $d-2$ and $\gamma_1^L=k-\iota(v_0)+2\leq k$.

If $\pi_\Delta=2$, as usual denote the antipodes of $v_0$ by $x$ and $y$, let $e:=\{x,y\}\in \Delta$ and $J:=\lk_\Delta(e)$, and by Lemma~\ref{lem:iota_equals_2_eqn} conclude
\[
 \gamma^\Delta(t)=\gamma^{L}(t)+t\gamma^J(t).
\]
By Lemma~\ref{lem:polar_values_i}, $\gamma_1^{L}=k$.
By Theorem~\ref{thm:bound_index}, $\gamma_j^{L}=0$ for all $j>k$.
Since $p_j=0$ for all $j>k$, therefore $\gamma_j^{J}=0$ for all $j>k-1$.

Knowing that $p_k=\gamma_k^\Delta=1=\gamma_k^L +\gamma_{k-1}^J$ and that $\gamma_1^L=k$, by Theorem~\ref{thm:bound_index}, there are two possibilities: either $\gamma_k^L=1$ and $\gamma_{k-1}^J=0$, or $\gamma_k^L=0$ and $\gamma_{k-1}^J=1$.

{\bf Case $\gamma_k^L=1$.} By Theorem~\ref{thm:bound_index}, $L\cong\Sigma^m\join^k C_5$, for a certain $m\geq 0$.
Hence $\gamma^L(t)=q_k(t)$ and $\gamma^J(t)=r(t)$.
Since $J$ is the link of an edge of $\Delta$, it is an equator of $L$.
By Lemma~\ref{lem:equators_type}, $J$ is either $\Sigma^{m-1}\join^{k}C_5$ or $\Sigma^{m+1}\join^{k-1}C_5$.
Hence $(1+t)^2$ divides $\gamma^J(t)$, as $k\geq3$.
Since $(1+t)$ does not divide $\gamma^J(t)=r(t)$ this is not possible.

{\bf Case $\gamma_k^{L}=0$ and $\gamma_{k-1}^{J}=1$.} By Theorem~\ref{thm:bound_index} and as $\gamma_1^J\leq\gamma_1^{L}=k$, it follows that $\gamma_1^J\in\{k-1,k\}$.

{\bf Subcase $\gamma_1^J=k-1$.} Then $J\cong \Sigma^m\join^{k-1} C_5$, for a certain $m\geq 0$.
Using the values for $\gamma_1^J$ and~$\gamma_1^L$, there are exactly $3$ vertices in $\del_L(J)$, forming an edge and isolated vertex.
Denote by $w_0$ the isolated vertex of $\verti(L)\setminus \verti(J)$, so $\iota_L(w_0)=2$.
We have $\gamma^L(t)=\gamma^J(t)+t\gamma^M(t)$, where $M$ is the link of the edge of antipodes of $w_0$ in $L$, by Lemma~\ref{lem:iota_equals_2_eqn}.
Therefore
\[
 \gamma^\Delta(t)=q_k(t)+tr(t) = \gamma^L(t)+t\gamma^J(t)=(1+t)\gamma^J(t)+t\gamma^M(t)=(1+t)^k+t\gamma^M(t).
\]
Hence $r(t)=\gamma^M(t)$.
But $M$ is an equator of $J\cong\Sigma^m\join^{k-1} C_5$, hence $M\cong \Sigma^{m-1}\join^{k-1}C_5$ or $M\cong \Sigma^{m+1}\join^{k-2}C_5$.
Since $k\geq 3$, $(1+t)$ divides $r(t)$ and this contradicts the assumption.

{\bf Subcase $\gamma_1^J=k$.} Then $L=\Sigma J$ and by Theorem~\ref{thm:extract_cycle}, $\Delta\cong J \join C_5$.
Hence $p(t) = \gamma^\Delta(t) = (1+t)\gamma^J(t) = q_k(t) +tr(t) = (1+t)^k + tr(t)$.
Equivalently, $(1+t)\left(\gamma^J(t)-(1+t)^{k-1}\right)=tr(t)$, and $(1+t)$ has to divide $r(t)$, which is impossible by assumption.

Finally, assume $\pi_{\Delta}\geq 3$.
Since $k\geq 3$, the dimension of $\Delta$ satisfies $d\geq6$.
Because of the latter bound on the dimension, we can apply Corollary~\ref{cor:polar_size_3_degree_gamma}, so that $\pi_\Delta-1\geq 2$ divides the highest degree coefficient of $\gamma^{\Delta}(t)$, i.e., $\gamma_k^\Delta=1$, which is a contradiction.
\end{proof}

\begin{corollary}
Let $k\geq 3$ and $\Lambda$ be the simplicial complex consisting of a $(k-1)$-simplex and a disconnected vertex.
The $f$-vector of $\Lambda$ is not the $\gamma$-vector of any flag homology sphere, despite being the $f$-vector of a balanced flag complex $\Lambda$.
\end{corollary}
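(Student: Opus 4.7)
The plan is to compute the $f$-polynomial of $\Lambda$ explicitly and then invoke Theorem~\ref{thm:poly} directly. Since $\Lambda$ is the disjoint union of a $(k-1)$-simplex and an isolated vertex, the simplex contributes $\binom{k}{i}$ faces of cardinality $i$ for each $1\leq i\leq k$, and the isolated vertex contributes one more vertex. Together with the convention $f_0=1$, this gives
\[
f^\Lambda(z) \;=\; (1+z)^k \,+\, z.
\]

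Next, I would match this expression to the form appearing in Theorem~\ref{thm:poly}: setting $q_k(t)=(1+t)^k$ and $r(t)=1$, we have $p(t):=q_k(t)+t\,r(t)=(1+t)^k+t$, which is exactly $f^\Lambda(t)$. The hypotheses of the theorem are easily checked: $r(t)=1$ has degree $0\leq k-2$ since $k\geq 3$, constant term $1$, and is not divisible by $(1+t)$ in $\mathbb{Z}[t]$. Theorem~\ref{thm:poly} therefore asserts that $p(t)$ is not the $\gamma$-polynomial of any flag homology sphere, and hence the $f$-vector of $\Lambda$ cannot occur as the $\gamma$-vector of a flag homology sphere of any dimension.

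Finally, I would briefly justify the parenthetical claim that $\Lambda$ is a flag balanced complex. Flagness is immediate: the missing faces of $\Lambda$ are exactly the $k$ pairs consisting of the isolated vertex together with a vertex of the simplex, all of which have cardinality $2$. Balancedness follows by coloring the $k$ vertices of the simplex with $k$ distinct colors and assigning any color to the isolated vertex, yielding a proper vertex coloring with $k=\dim(\Lambda)+1$ colors.

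There is no genuine obstacle here: all the structural work has been done in Theorem~\ref{thm:poly}, and the corollary is an application to the particular polynomial $(1+t)^k+t$. The only minor point that deserves an explicit sentence is the verification that the hypotheses of Theorem~\ref{thm:poly} allow $r(t)$ to be the constant polynomial $1$, which they do.
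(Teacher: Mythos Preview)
Your proof is correct and follows exactly the same approach as the paper: compute $f^\Lambda(t)=(1+t)^k+t$ and apply Theorem~\ref{thm:poly} with $r(t)=1$. Your additional verification of the flag and balanced properties of $\Lambda$ and of the hypotheses on $r(t)$ is more explicit than the paper's one-line proof, but the argument is identical.
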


\begin{proof}
The $f$-polynomial of $\Lambda$ is $f(t)=q_k(t)+t$, so that $r(t)=1$ in Theorem~\ref{thm:poly}.
\end{proof}

\bibliographystyle{alpha}
\bibliography{bibliography}

\end{document}